\newtheorem{lem}{Lemma}[section]
\newtheorem{cor}[lem]{Corollary}
\newtheorem{prop}[lem]{Proposition}
\newtheorem{remark}[lem]{Remark}
\newtheorem{thm}[lem]{Theorem}
\newtheorem{defn}[lem]{Definition}
\newtheorem{eg}[lem]{Example}
\newtheorem{assum}[lem]{Assumption}
\newtheorem*{prop*}{Proposition}
\newtheorem*{thm*}{Theorem}
\newtheorem*{def*}{Definition}
\newtheorem*{lem*}{Lemma}
\newcommand{\R}{\mathbb{R}}
\renewcommand{\P}{\mathbb{P}}
\newcommand{\I}{\mathbb{I}}
\newcommand{\F}{\mathcal{F}}
\newcommand{\N}{\mathbb{N}}
\renewenvironment{proof}[1][\proofname] {\par\pushQED{\qed}\normalfont\topsep6\p@\@plus6\p@\relax\trivlist\item[\hskip\labelsep\bfseries#1\@addpunct{:}]\ignorespaces}{\popQED\endtrivlist\@endpefalse}
\renewcommand{\( }{\left(}
\renewcommand{\)}{\right)}
\pgfplotsset{compat=1.16}
\DeclareMathOperator*{\diver}{div}
\DeclareMathOperator*{\capac}{Cap}
\DeclareMathOperator*{\tr}{Tr}
\DeclareMathOperator*{\supp}{supp}
\begin{document}
\title{On A Class Of Rank-Based Continuous Semimartingales \footnote{The first author acknowledges the support of the Natural Sciences and Engineering Research Council of Canada (NSERC).}}
\author{David Itkin\thanks{Department of Mathematical Sciences, Carnegie Mellon University, Wean Hall, 5000 Forbes Ave, Pittsburgh, Pennsylvania 15213, USA, \url{ditkin@andrew.cmu.edu}.} \and Martin Larsson\thanks{Department of Mathematical Sciences, Carnegie Mellon University, Wean Hall, 5000 Forbes Ave, Pittsburgh, Pennsylvania 15213, USA, \url{martinl@andrew.cmu.edu}.}}
\maketitle
\begin{abstract}
Using the theory of Dirichlet forms we construct a large class of continuous semimartingales on an open domain $E \subset \R^d$, which are governed by rank-based, in addition to name-based, characteristics. Using the results of Baur et al.\ \cite{baur2013construction} we obtain a strong Feller property for this class of diffusions. As a consequence we are able to establish the nonexistence of triple collisions and obtain a simplified formula for the dynamics of its rank process. We also establish conditions under which the process is ergodic. Our main motivation is Stochastic Portfolio Theory (SPT), where rank-based diffusions of this type are used to model financial markets. We show that three main classes of models studied in SPT -- Atlas models, generalized volatility-stabilized models and polynomial models -- are special cases of our framework.
\end{abstract}

\bigskip

\textbf{Keywords}: Dirichlet forms; interacting particle systems; equations with rank-based coefficients; triple collisions; stochastic portfolio theory

\textbf{MSC 2020 Classification:} 60J46, 60J60 (primary); 70F10 (secondary)

\section{Introduction and Main Results}
We fix $d \geq 2$ and work on a nonempty connected open domain $E \subseteq \R^d$. Taking as inputs an instantaneous covariation matrix $c$ and a density function $p$ (satisfying Assumption~\ref{ass:inputs} below) we construct a particle system represented by a continuous semimartingale $X = (X_1,\dots,X_d)$ where the dynamics of each coordinate depend on its rank relative to the other coordinates.

Our approach is to use the theory of Dirichlet forms, with absorption at the boundary, using which we obtain existence of the continuous semimartingale $X$. Using the results of \cite{baur2013construction} we obtain that $X$ possesses the \emph{$L^p$-strong Feller property}. This allows us to analyze path properties of the particle system and prove that \emph{triple collisions} do not occur in this system; that is with probability one there is no positive time $t$ such that $X_i(t) = X_j(t) = X_k(t)$ for any distinct indices $i,j,k \in \{1,\dots,d\}$. Under a nonexplosion assumption on the process we obtain conditions when the process is ergodic and, as a consequence, obtain asymptotic rates on how much time each particle spends at each rank.  

The main motivation of this work is due to the important role that rank-based diffusions play in Stochastic Portfolio Theory (SPT). SPT was introduced by Fernholz in \cite{fernholz1999diversity,fernholz2002stochastic} as a descriptive theory with the goal of explaining observable market phenomena. An important observation in SPT is that the ranked relative market capitalizations, called the \emph{market weights}, have remained remarkably stable over time across different US equity markets \cite[Chapter~5]{fernholz2002stochastic}. This has spurred an interest in ergodic diffusions on the simplex whose dynamics evolve according to the ranks of the market weights. Three important classes of models include (generalized) volatility-stabilized models introduced in \cite{fernholz2005relative,pickova2014generalized}, (hybrid) Atlas models of \cite{banner2005atlas,ichiba2011hybrid} and polynomial diffusion models studied in \cite{cuchiero2019polynomial,MR3551857}.

 Recently, in \cite{kardaras2018ergodic} and later in \cite{itkin2020robust}, the authors worked with a certain class of ergodic diffusions, modelling the market weights, which were used to study an asymptotic robust growth optimization problem in the context of SPT. The approach taken there provides a systematic way to construct ergodic models for the market weight processes, which do not rely on very concrete specifications like the aforementioned examples, but rather allow for more general constructions only using the inputs $(c,p)$. The techniques employed in those studies, however, were unable to handle certain rank-based models, such as the Atlas model, due to the discontinuous characteristics that appear in the drift of the market weight process. Our construction extends the previous class to accommodate a richer family of rank-based models. In particular, we show that the Atlas model of \cite{banner2005atlas}, the volatility-stabilized market of \cite{fernholz2005relative} and the polynomial models of \cite{cuchiero2019polynomial} are special cases of the theory developed in this paper. 

Using the Dirichlet form structure we are also able to show that triple collisions do not occur in these models. The existence of triple collisions is an undesirable occurrence in many cases. Indeed, in \cite{MR3055258} the authors show that the SDE corresponding to a \emph{competing Brownian particle system} (that is, a system where the drift and volatility coefficients are rank-dependent and piecewise constant) has a strong solution up to the first triple collision time. In \cite{banner2008local} the authors show that the \textit{rank process} (also known as the \textit{order statistics}) of a semimartingale is again a semimartingale and obtain a general formula for its dynamics. This formula for the semimartingale's rank processes, however, simplifies considerably when the set of triple collision points is polar. This simplified formula leads to a more tractable analysis of rank-based portfolios in the context of SPT. As such, it is desirable to develop methods for determining whether or not triple collisions occur. The authors of \cite{MR3055258,MR2680554} derived a sufficient condition for triple collisions to not occur with probability one in the setting of competing Brownian particle systems with a bounded measurable drift, while \cite{MR3325099} showed that the condition of \cite{MR3055258} was both necessary and sufficient. Results concerning simultaneous collisions and collisions of four of more particles were obtained in \cite{MR3706753,MR3607803}. 

To the best of our knowledge previous approaches to study the behaviour of such systems have been limited to competing Brownian particle systems. In these cases the study of triple collisions was carried out using a very careful hands-on analysis of reflected Brownian motion and Bessel processes as much is known about the path properties of those processes. Our approach uses ideas from potential theory, most notably the notion of zero capacity sets, to obtain the nonexistence of triple collisions. This more abstract perspective allows us to handle cases beyond the competing Brownian particle systems described above; namely we analyze diffusions living on open subsets of $\R^d$ and evolving with drift and volatility characteristics that need not be piecewise constant. On the other hand, our approach is limited to continuous volatility structures and is unable to handle some of the volatility specifications for Brownian particle systems that were shown to be devoid of triple collisions in \cite{MR3055258}. A more nuanced discussion of the advantages and limitations of our approach is carried out in Section~\ref{sec:conclusion}.

Aside from the study of triple collisions, rank-based diffusions have garnered a lot of attention in the literature. For instance, Pal and Pitman \cite{MR2473654} study both a finite and infinite dimensional system of one-dimensional Brownian particle systems with rank-dependent drifts and obtain convergence to a stationary distribution under the total variation norm. Pal and Shkolnikov in \cite{MR3211002} obtain exponential concentration of measure results for such Brownian particle systems. Other recent works studying Brownian particle systems include \cite{MR2884054,MR2914770,MR3055264,MR3399811,MR3513592,MR3729655,MR3055258}.

The paper is organized as follows. Section~\ref{sec:Dirichlet_form} contains preliminaries including the Dirichlet form and integration by parts formula. In Section~\ref{sec:construction} we obtain existence of the process $X$ and establish the $L^p$-strong Feller property. Sections~\ref{sec:collisions} and \ref{sec:ergodicty} contain our main results including the nonexistence of triple collisions, conditions for ergodicity and formulas for long-term occupation times of the process. We also consider a few examples. Section~\ref{sec:SPT} contains applications to SPT including examples which relate the class of models constructed in this paper to those previously studied in the literature. Lastly, in Section~\ref{sec:conclusion} we conduct a detailed discussion of the limitations of our approach. 

\subsection{Notation}
For $d \geq 2$ we denote by $\boldsymbol{1}$ the all ones vector in $\R^d$. For a measurable set $E \subset \R^d$ we write $\mathcal{B}(E)$ and $\mathcal{B}_b(E)$ for the Borel subsets of $E$ and the collection of Borel measurable bounded functions on $E$ respectively. We denote by $\mathbb{S}^d_{++}$ the set of all symmetric positive definite $d \times d$ matrices. For $A \in \mathbb{S}^d_{++}$ we define its trace and kernel by $\tr(A) = \sum_{i=1}^d A_{ii}$ and Ker$(A)= \{x \in \R^d: Ax = 0\}$ respectively. For $k \geq 1$, an open set $U \subseteq \R^d$ and a vector space $V$, we will denote by $C^k(U;V)$ the set of all $k$-times continuously differentiable functions from $U$ to $V$. $C^0(U;V)$ is used to denote the set of continuous functions. The inclusion of a subscript `$c$' denotes the set of functions in that class with compact support. When $V = \R$ we omit it from the notation and simply write $C^k(U)$ and $C^k_c(U)$ instead of $C^k(U;\R)$ and $C_c^k(U;\R)$ respectively.  $\mathcal{T}$ denotes the set of permutations on $\{1,\dots,d\}$. For a vector $x \in \R^d$ and a permutation $\tau \in \mathcal{T}$ we write $x_{\tau}$ for the vector $(x_{\tau(1)},\dots,x_{\tau(d)})$. For a semimartingale $X$ and $a \in \R$, we denote by $L_X^a$ its local time process at $a$,
$$L_X^a(t) = |X(t)-a| - |X_0 - a| - \int_0^t \text{sign}(X(s)-a)dX(s)$$
where sign$(x) = 1_{x > 0} - 1_{x \leq 0}$. We write $L_X$ for $L_X^0$.
\section{Dirichlet Form and Integration by Parts} \label{sec:Dirichlet_form}
Fix $d\geq 2$ and a nonempty open connected domain $E \subset \R^d$. For a permutation $\tau \in \mathcal{T}$ define the set
$$E_{\tau} := \{x \in E: x_{\tau(1)} > x_{\tau(2)} > \dots > x_{\tau(d)}\}.$$
Note that the sets $\{E_\tau\}_{\tau \in \mathcal{T}}$ are pairwise disjoint. 
We write $\bar E_\tau$ for the closure of $E_\tau$ under the subspace topology on $E$ and define $\partial E_\tau = \bar E_\tau \setminus E_\tau$. Then $E = \cup_{\tau \in \mathcal{T}} \bar E_\tau$.
We now define a certain class of functions on $E$ that will be suitable for the construction of our process in the next section. 

\begin{defn} 
	\begin{enumerate}[label = {(\arabic*)}]
		\item For a set $V$, a function $f:E \to V$ and a permutation $\tau \in \mathcal{T}$ we will write $f_\tau$ for $f|_{\bar E_\tau}$. We call the collection of functions $\{f_\tau\}_{\tau \in \mathcal{T}}$ the \emph{rank decomposition of $f$} and we have the identity $f = \sum_{\tau \in \mathcal{T}}f_\tau\I_{E_\tau}$ away from the common boundaries $\partial E_\tau \cap \partial E_{\tau'}$.
		\item For each $k \geq 1$ define the sets
		\begin{align*}C^k_{\mathcal{T}}(E;V) & := \{ f \in C^0(E;V):  \ f_\tau \in C^k(E_\tau;V) \text{ for every } \tau \in \mathcal{T}\},\\
		C^k_{\mathcal{T},c}(E;V)&:= C^k_{\mathcal{T}}(E;V) \cap C_c^0(E;V).
	 \end{align*}
	\end{enumerate}
\end{defn}
The set $C^k_{\mathcal{T}}(E;V)$ contains all functions $f$ that are both continuous on $E$ and $k$-times continuously differentiable on $E_{\tau}$ for each $\tau \in \mathcal{T}$. We also have the inclusions
\[C^k(E;V) \subset C^k_{\mathcal{T}}(E;V) \subset C^0(E;V); \qquad C^k_c(E;V) \subset C^k_{\mathcal{T},c}(E;V) \subset C_c^0(E;V)\]
for every $k \geq 1$.

We take two inputs $(c,p)$, which will play the role of a the volatility matrix and invariant density, respectively, of the process we construct. They are assumed to satisfy:
\begin{assum}\label{ass:inputs}  $c \in C^1_{\mathcal{T}}(E;
		\mathbb{S}^d_{++})$ and $p \in C_{\mathcal{T}}^1 (E;(0,\infty))$ are such that $cp$ is locally Lipschitz continuous; that is for every compact set $K \subset E$, $cp$ is Lipschitz continuous on $K$.
\end{assum}
We define the pre-Dirichlet form
\begin{equation} \label{eqn:Dirichlet_form}
	\mathcal{E}(u,v) := \frac{1}{2}\int_{E} \nabla u^\top c \nabla v p; \quad u,v \in 
\mathcal{D} := C_c^\infty(E).
\end{equation}
$\mathcal{E}$ is a symmetric, Markovian bilinear form. We refer the reader to \cite{fukushima2010dirichlet} for definitions of these notions and for an in-depth treatment of the theory of Dirichlet forms.
By \cite[Proposition~1.9]{baur2013construction} we have that $(\mathcal{E}, \mathcal{D})$ is closable on $L^2(E;\mu)$ where the  measure $\mu$ is defined as \[d\mu(x) = p(x)dx.\] Note that we do not require $\mu$ to be a finite measure at this stage. We denote the closure of $(\mathcal{E},\mathcal{D})$ by $(\mathcal{E},D(\mathcal{E}))$. The closure is a symmetric, strongly local and regular Dirichlet form on $L^2(E;\mu)$. For every $i \in \{1,\dots,d\}$ define  \[\diver c_i = \sum_{j=1}^d \partial_jc_{ij}, \qquad b = \frac{1}{2}\diver c+ \frac{1}{2}c\nabla \log p.\] Note that, by Assumption~\ref{ass:inputs}, both $p$ and the components of $c$ are differentiable outside of a Lebesgue null-set so that $\diver c$ and $b$ are well-defined up to a Lebesgue null-set. Define the operator
\begin{equation} \label{eqn:generator} Lf = 	b^\top \nabla f + \frac{1}{2}\tr(c \nabla^2 f); \quad f \in \mathcal{D}.
\end{equation}
We establish the following integration by parts formula.
\begin{thm}[IBP] \label{thm:IBP}
	Let $v \in C^1(E)$ and $\xi \in C^1_{\mathcal{T},c}(E;\R^d)$ be given. Then
	\begin{equation} \label{eqn:IBP} \frac{1}{2}\int_{E} \nabla v^\top c \xi p = -\frac{1}{2}\int_{E} v\diver(c\xi p).
	\end{equation}
	In particular when $\xi = \nabla u$ for some $u \in C^2_{\mathcal{T},c} (E)$, \eqref{eqn:IBP} becomes $\mathcal{E}(u,v) = (-Lu,v)_{L^2(E;\mu)}$.
\end{thm}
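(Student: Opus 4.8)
The plan is to reduce \eqref{eqn:IBP} to a single divergence-theorem computation carried out chamber by chamber. Set $w := c\xi p \colon E \to \R^d$. By Assumption~\ref{ass:inputs} and $\xi \in C^1_{\mathcal{T},c}(E;\R^d)$ the field $w$ is continuous on $E$, has compact support, restricts to a $C^1$ field $w_\tau = c_\tau\xi_\tau p_\tau$ on each chamber $E_\tau$, and has integrable divergence, so both integrals in \eqref{eqn:IBP} are finite. The set $N := \bigcup_\tau\partial E_\tau$ is Lebesgue-null, being contained in the finitely many hyperplanes $\{x_i=x_j : i\ne j\}$. Away from $N$ the product rule gives $\diver(vw) = \nabla v^\top w + v\,\diver w$, so after multiplying \eqref{eqn:IBP} by $2$ and rearranging, the claim becomes equivalent to $\int_E\diver(vw)\,dx = 0$. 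Were $w$ globally $C^1$ this would be immediate since $vw$ is compactly supported in $E$; the only content is that $w$ is merely piecewise $C^1$ across the walls $N$.

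I would then split along the chambers: since $N$ is null, $\int_E\diver(vw)\,dx = \sum_{\tau\in\mathcal{T}}\int_{E_\tau}\diver(v w_\tau)\,dx$, and on each $E_\tau$ I would apply the Gauss--Green theorem. The chamber $E_\tau$ is the intersection of the open polyhedral cone $\{x_{\tau(1)}>\dots>x_{\tau(d)}\}$ with $E$, hence locally a Lipschitz domain, and $vw_\tau$ is $C^1$ with compact support; localizing with a partition of unity subordinate to a cover of $\supp\xi$ by small balls contained in $E$, one may ignore $\partial E$. This yields
$$\int_{E_\tau}\diver(v w_\tau)\,dx = \int_{\partial E_\tau} v\,w_\tau^\top n_\tau\,d\sigma,$$
with $n_\tau$ the outward unit normal and $\sigma$ surface measure. (To avoid relying on $C^1$-regularity up to $\partial E_\tau$, one may instead prove this first on the exhausting subdomains $E_\tau^\varepsilon := \{x\in E : x_{\tau(k)}-x_{\tau(k+1)}>\varepsilon,\ 1\le k\le d-1\}$, whose closures meet $\supp\xi$ in a compact subset of the open chamber, and then let $\varepsilon\downarrow0$.)

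It remains to show $\sum_\tau\int_{\partial E_\tau} v\,w_\tau^\top n_\tau\,d\sigma = 0$. Within $\supp\xi$ the boundary $\partial E_\tau$ consists of the codimension-one faces $F_{\tau,k}\subset\{x_{\tau(k)}=x_{\tau(k+1)}\}$, $1\le k\le d-1$, together with a set of codimension at least two, which carries no surface measure. Each $F_{\tau,k}$ is shared with exactly one other chamber $E_{\tau'}$, namely $\tau'=\tau\circ(k,k+1)$, and there the outward normals are opposite: $n_{\tau'}=-n_\tau$ on $F_{\tau,k}=F_{\tau',k}$. Since $c$, $p$, $\xi$ and $v$ are continuous on all of $E$, the values of $v$ and $w$ on this common face agree from either side, so the contributions of $F_{\tau,k}$ and $F_{\tau',k}$ cancel; pairing up all faces gives the desired vanishing, hence \eqref{eqn:IBP}. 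This boundary bookkeeping -- securing enough regularity of $c\xi p$ near the walls to run Gauss--Green, then matching up faces while discarding the lower-dimensional corners -- is the one genuinely delicate step; the rest is routine.

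For the final assertion I would take $\xi=\nabla u$ with $u\in C^2_{\mathcal{T},c}(E)$ and expand $\diver(c\nabla u\,p) = \sum_{i,j}\partial_i(c_{ij}p)\,\partial_j u + \sum_{i,j}c_{ij}p\,\partial_{ij}u$. By the symmetry $c=c^\top$ one has $\sum_i\partial_i(c_{ij}p) = p\big[(\diver c)_j + (c\nabla\log p)_j\big] = 2p\,b_j$, while $\sum_{i,j}c_{ij}p\,\partial_{ij}u = p\,\tr(c\nabla^2 u)$, so $\diver(c\nabla u\,p) = 2p\big(b^\top\nabla u + \tfrac12\tr(c\nabla^2 u)\big) = 2p\,Lu$ by \eqref{eqn:generator}. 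Substituting into \eqref{eqn:IBP}, and using $c=c^\top$ once more to identify $\tfrac12\int_E\nabla v^\top c\nabla u\,p$ with $\mathcal{E}(u,v)$, gives $\mathcal{E}(u,v) = -\int_E (Lu)\,v\,d\mu = (-Lu,v)_{L^2(E;\mu)}$.
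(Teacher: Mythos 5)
Your argument is essentially the paper's own proof: decompose $E$ into the chambers $E_\tau$, apply Gauss--Green on each chamber, and cancel the interior boundary contributions in pairs using the opposite outward normals together with the continuity of $v$, $\xi$ and $cp$ across the common faces (discarding the lower-dimensional intersections), followed by the same pointwise identity $\diver(c\nabla u\,p)=2pLu$ for the final claim. The only cosmetic difference is how Gauss--Green is justified near the walls: the paper invokes the local Lipschitz continuity of $c\xi p$ from Assumption~\ref{ass:inputs} and cites a Gauss--Green theorem for Lipschitz fields, while you propose an $\varepsilon$-retraction of the chambers, which amounts to the same thing.
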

\begin{proof}  
	For every $\tau \in \mathcal{T}$ and $x \in \partial E_\tau$, denote by $\nu_{\tau}(x)$ an outward pointing normal vector at $x$ from $E_\tau$. Denote by $\partial \mathcal{T}^2$ the set of all pairs $\tau,\tau' \in \mathcal{T}$ such that $\tau \ne \tau'$ and $\partial E_\tau \cap \partial E_{\tau'}$ has co-dimension $d-1$. Then, for any pair $(\tau,\tau') \in \partial \mathcal{T}^2$ we have by anti-symmetry the relationship
		\begin{equation} \label{eqn:normal_vect_relation}
	\nu_{\tau}(x) = - \nu_{\tau'}(x); \quad \mathcal{H}^{d-1}\text{-a.e }x \in \partial E_{\tau} \cap \partial E_{\tau'}, 
	\end{equation} 
where $\mathcal{H}^{d-1}$ is the $d-1$ dimensional Hausdorff measure.
	Now we turn our attention to establishing the integration by parts formula. Using the fact that $v,c_\tau,p_\tau,\xi$ are continuous on $\bar E_\tau$, $\xi$ vanishes on a neighbourhood of $\partial E$ and, by virtue of Assumption~\ref{ass:inputs}, $c\xi p$ is Lipschitz continuous on $\bar E_\tau  \cap \supp(\xi)$  we can directly integrate by parts (see\ \cite[Corollary~9.66]{MR3726909}) to obtain 
	\begin{align}\int_{E} \nabla  v^\top c  \xi p  &= \sum_{\tau \in \mathcal{T}} \int_{E_\tau} \nabla v^\top c_\tau \xi p_\tau \nonumber \\
	& = \sum_{\tau \in \mathcal{T}} \int_{E_\tau} -v \diver (c_\tau \xi p_\tau) + \int_{\partial E_\tau} v \nu_\tau^\top c_\tau \xi p_\tau d\mathcal{H}^{d-1} \nonumber\\
	& = \int_{E} -v\diver(c\xi p) +  \sum_{(\tau,\tau') \in \partial\mathcal{T}^2} \int_{\partial E_\tau \cap \partial E_{\tau'}} v \nu_{\tau}^\top (c_\tau p_\tau - c_{\tau'}p_{\tau'})\xi d\mathcal{H}^{d-1} \label{eqn:IBP_boundary} \\
	& =  \int_{E} -v\diver(c\xi p). \nonumber
	\end{align}
 The second to last equality follows from \eqref{eqn:normal_vect_relation}, while the last equality follows since $c_\tau p_\tau = c_{\tau'}p_{\tau'}$ on $\partial E_{\tau} \cap \partial E_{\tau'}$. This proves the general integration by parts formula.
	The final claim in the statement of the theorem now follows from the fact that for a function $u \in C^2_{\mathcal{T},c}(E)$ we have $pLu = \frac{1}{2}\diver (c\nabla up)$ almost everywhere.
\end{proof}

Theorem \ref{thm:IBP} implies that the generator associated with $\mathcal{E}$ coincides with $L$ when acting on functions in $\mathcal{D}$. With some abuse of notation we set
$$(L,D(L)) = \text{generator of } (\mathcal{E},D(\mathcal{E})).$$
\section{Construction of the Process}\label{sec:construction}
To construct our process of interest we extend the state space.
Let $\hat E := E \cup \{\Theta\}$ be the one point compactification of $E$. By convention, we extend any real-valued function $f$ on $E$ to a function on $\hat E$ by setting $f(\Theta) = 0$. 
We will use the results of \cite{baur2013construction} to obtain a diffusion process, possessing the so-called \emph{$L^p$-strong Feller} property, corresponding to our Dirichlet form. 

For the reader's convenience we state the relevant part of that theorem.
\begin{thm}[{\citealp[Theorem~1.12]{baur2013construction}}] \label{thm:strong_feller} 
	There exists a filtered probability space  $(\Omega,\F,\{\F(t)\}_{t \geq 0}, \{\P_x\}_{x \in \hat E})$ that supports a Hunt process $\{X(t)\}_{t \geq 0}$
	with state space $E$ and cemetery state $\Theta$. The process has continuous paths on $[0,\infty)$ and yields a solution to the martingale problem for $(L,D(L))$; that is 
	\begin{equation} \label{eqn:MP} M^{[u]}(t) := u(X(t)) -  u(x) - \int_0^t Lu(X(s))ds, \quad t \geq 0
	\end{equation}is an $(\F(t))$-martingale under $\P_x$ for every $u \in D(L)$ and $x \in \hat E$. Furthermore, for every $d < p< \infty$ the transition semigroup $(P_t)_{t \geq 0}$ is $L^p
$-strong Feller; that is $P_tf \in C^0(E)$ for every $f \in L^p(E;\mu)$ and $t > 0$.
\end{thm}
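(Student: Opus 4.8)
I would not prove this from scratch; the plan is to deduce it directly from \cite[Theorem~1.12]{baur2013construction}, so the task reduces to checking that the Dirichlet form $(\mathcal{E},D(\mathcal{E}))$ of \eqref{eqn:Dirichlet_form} satisfies the standing hypotheses of that reference. First I would recall those hypotheses: \cite{baur2013construction} treats symmetric forms of the shape $\int_E \langle A\nabla u,\nabla v\rangle\,d\mu$ with $d\mu=\rho\,dx$ on an open set $E\subseteq\R^d$, under the requirements that $\rho>0$ with $\rho$ and $\rho^{-1}$ locally bounded, that $A$ be symmetric and locally uniformly elliptic, and that the coefficients carry enough local Sobolev regularity — local Lipschitz continuity of the relevant product being amply sufficient — after which their theorem furnishes, for every $p>d$, an associated Hunt diffusion with the $L^p$-strong Feller property. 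In our situation the dictionary is $A=\frac{1}{2}c$ and $\rho=p$.

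Second, I would verify each hypothesis against Assumption~\ref{ass:inputs}. Positivity and local boundedness of $p$ and $p^{-1}$ follow from $p\in C^1_{\mathcal{T}}(E;(0,\infty))\subseteq C^0(E;(0,\infty))$ together with compactness. Local uniform ellipticity of $c$, hence of $\frac{1}{2}c$, follows because $c$ is continuous and $\mathbb{S}^d_{++}$-valued, so $x\mapsto\lambda_{\min}(c(x))$ is a positive continuous function and is bounded below on each compact $K\subset E$; continuity of $c$ also gives the local upper bound. The decisive regularity input is that $cp$ is locally Lipschitz, which is exactly Assumption~\ref{ass:inputs}: by Rademacher this places $A\rho=\frac{1}{2}cp$ in $W^{1,\infty}_{\mathrm{loc}}(E)\subseteq W^{1,q}_{\mathrm{loc}}(E)$ for every $q<\infty$, and it is also precisely what makes $\diver(c\xi p)$ well defined, as already exploited in Theorem~\ref{thm:IBP}. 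The piecewise structure encoded by the $\mathcal{T}$-decomposition is harmless here, since the only global regularity the construction requires is that of the product $cp$, not of $c$ and $p$ separately. The closability on $L^2(E;\mu)$ and the fact that the closure is a symmetric, strongly local, regular Dirichlet form have already been recorded in Section~\ref{sec:Dirichlet_form} via \cite[Proposition~1.9]{baur2013construction}.

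Third, with the hypotheses in hand I would simply invoke \cite[Theorem~1.12]{baur2013construction}: it produces the filtered probability space, the Hunt process $X$ on $E$ with cemetery $\Theta$ (the point adjoined in the one-point compactification $\hat E$, to which $X$ is sent upon leaving $E$ or exploding), continuity of paths — where strong locality of $\mathcal{E}$ is what upgrades the Hunt process to a diffusion — and the $L^p$-strong Feller property of $(P_t)_{t\ge 0}$ for every $p>d$. That $X$ solves the martingale problem \eqref{eqn:MP} for $(L,D(L))$ is the Fukushima-type stochastic calculus for the Hunt process attached to a Dirichlet form (cf.\ \cite{fukushima2010dirichlet}), combined with the identification of the generator with $L$ on $\mathcal{D}$ from Theorem~\ref{thm:IBP}; this is also part of what \cite{baur2013construction} delivers. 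The only genuine obstacle is bookkeeping: matching our $(c,p)$-normalisation to the precise form of the assumptions in \cite{baur2013construction}, and confirming that local Lipschitz continuity of the product $cp$ — rather than of $c$ and $p$ individually — is indeed the regularity their theorem consumes. No new probabilistic or analytic estimate is needed.
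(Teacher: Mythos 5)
Your proposal matches the paper's treatment: the paper gives no independent proof but simply quotes \cite[Theorem~1.12]{baur2013construction}, with Assumption~\ref{ass:inputs} (positivity and continuity of $p$, local uniform ellipticity of the continuous $\mathbb{S}^d_{++}$-valued $c$, and local Lipschitz continuity of the product $cp$) designed precisely so that the hypotheses of that reference hold, and with closability already recorded via \cite[Proposition~1.9]{baur2013construction}. Your hypothesis-checking plus invocation of the cited theorem is therefore essentially the same approach, and it is correct.
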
 
Note that if $\mu$ is a finite measure then $\mathcal{B}_b(E) \subset L^p(E;\mu)$ for each $r \geq 1$ so that, in this case, $X$ also possesses the (standard) strong Feller property. As a consequence of Theorem~\ref{thm:strong_feller} we obtain absolute continuity of the transition function corresponding to $X$. This was already shown in the proof of \cite[Theorem~2.8]{baur2013construction}, but since this fact plays a crucial role in proving the absence of triple collisions we reproduce the proof below.
\begin{cor}[Absolute continuity condition] \label{cor:abs_continuity}
	Let $\{p_t\}_{t > 0}$ denote the transition function corresponding to the process $X$ guaranteed by Theorem~\ref{thm:strong_feller}. Then $p_t(x,\cdot)$ is absolutely continuous with respect to $\mu$ for each $x \in E$ and $t > 0$.
\end{cor}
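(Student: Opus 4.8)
The goal is to show that every $\mu$-null Borel set $N \subseteq E$ is also $p_t(x,\cdot)$-null for every $x \in E$ and $t > 0$, which is precisely absolute continuity. The plan is to combine the pointwise regularity furnished by the $L^p$-strong Feller property of Theorem~\ref{thm:strong_feller} with the fact, inherent in the Dirichlet form construction, that the transition semigroup $(P_t)$ of $X$ is a $\mu$-version of the $L^2(E;\mu)$-semigroup generated by $(L,D(L))$. Throughout fix $p \in (d,\infty)$, and let $N \subseteq E$ be Borel with $\mu(N) = 0$; since $\int_E \I_N^p\, d\mu = \mu(N) = 0$, the bounded Borel function $\I_N$ represents the zero element of both $L^2(E;\mu)$ and $L^p(E;\mu)$.

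Now I would establish two facts about the concrete function $x \mapsto p_t(x,N) = P_t\I_N(x) = \E_x[\I_N(X(t))]$. First, since $\I_N$ is zero in $L^2(E;\mu)$ and $(P_t)$ agrees $\mu$-a.e.\ with the $L^2$-semigroup, this function vanishes $\mu$-a.e.\ (equivalently, one may invoke $\mu$-symmetry of the semigroup: $\int_E p_t(x,N)\,\mu(dx) = \int_E P_t\I_N\,d\mu \le \int_E \I_N\,d\mu = 0$, after a routine monotone approximation to cover the possibly infinite measure $\mu$). Second, since $\I_N \in L^p(E;\mu)$ and $t > 0$, the $L^p$-strong Feller property of Theorem~\ref{thm:strong_feller} gives $P_t\I_N \in C^0(E)$, i.e.\ $x \mapsto p_t(x,N)$ is continuous on $E$.

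To conclude, I would use that a continuous function on the open set $E$ vanishing $\mu$-a.e.\ must vanish identically: by Assumption~\ref{ass:inputs} we have $d\mu = p\,dx$ with $p > 0$ on $E$, so $\mu$ charges every nonempty open subset of $E$, and a nonzero value at some point would persist on a neighbourhood of positive $\mu$-measure. Hence $p_t(x,N) = 0$ for every $x \in E$; since $t > 0$ and the $\mu$-null set $N$ were arbitrary, $p_t(x,\cdot) \ll \mu$ for all $x \in E$ and $t > 0$. (One could equivalently interpose a Chapman--Kolmogorov step $p_t(x,N) = \int_E p_{t/2}(x,dy)\,p_{t/2}(y,N)$ and apply the $L^p$-strong Feller property to $p_{t/2}(\cdot,N) \in L^p(E;\mu)$, but this is not necessary.)

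The step requiring care — and the main obstacle — is the bookkeeping around $\mu$-equivalence classes: one must make sure the $L^p$-strong Feller property is applied to the concrete Borel representative $\I_N$, so that it yields continuity of the genuine transition probability $x \mapsto p_t(x,N)$ rather than of some other $\mu$-version, and that writing $P_t\I_N(x) = \E_x[\I_N(X(t))]$ does not tacitly presuppose the absolute continuity being proved. This is exactly why both halves of the middle step are needed: the $L^2$ side pins the value down $\mu$-a.e., the $L^p$-strong Feller side supplies continuity, and only together do they force the value to be identically zero.
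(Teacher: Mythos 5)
Your proof is correct and follows essentially the same route as the paper's: pin down $P_t\I_N=0$ $\mu$-a.e.\ via symmetry of the semigroup, upgrade to everywhere-zero via the $L^p$-strong Feller continuity and the full support of $\mu$. The only cosmetic difference is that the paper first truncates $N$ by an exhausting sequence $F_k$ with $\mu(F_k)<\infty$ and finishes with monotone convergence, whereas you correctly observe that $\I_N$ already has zero norm in every $L^p(E;\mu)$ so the truncation is unnecessary.
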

\begin{proof}
Let $\{F_k\}_{k=1}^\infty$ be an increasing sequence of sets such that  $E = \cup_{k=1}^\infty F_k$ and $\mu(F_k) < \infty$ for each $k$. Let $N \in \mathcal{B}(E)$ be such that $\mu(N) = 0$ and fix $t > 0$. Note that for each $k$ the function $f_k := 1_{N \cap F_k}$ is a member of $L^p(E;\mu)$ for all $1 \leq p \leq \infty$. Since $P_t$ is a symmetric operator on $L^2(E;\mu)$ it follows that for any $g \in L^2(E;\mu)$ and $k \in \N$ we have \[(g,P_tf_k)_{L^2(E;\mu)} = (P_tg,f_k)_{L^2(E;\mu)} = 0.\] Hence, we obtain that $p_t(x,N \cap F_k) = 0$ for $\mu$-a.e $x \in E$. However, by Theorem~\ref{thm:strong_feller} the function $p_t(\cdot, N \cap F_k) = P_tf_k$ is continuous so it follows that $p_t(x,N\cap F_k) = 0$ for every $x \in E$. By monotone convergence we obtain that $p_t(x,N) = \lim_{k \to \infty} p_t(x,N \cap F_k) = 0$ which completes the proof.
\end{proof}
We will now show that the process $X$ constructed from Theorem~\ref{thm:strong_feller} is a semimartingale prior to its explosion time and obtain its semimartingale decomposition. For any open set $G \subset \hat E$ set
$$\mathcal{D}_G = \{u \in \mathcal{D}: \text{Supp}(u) \subseteq G\}.$$
We now state \cite[Theorem~6.3]{fukushima1999semi}, which we will employ to establish the semimartingale property.
\begin{thm}[{\citealp[Theorem~6.3]{fukushima1999semi}}] \label{thm:semimrt_character} The following conditions are equivalent for $u \in D(\mathcal{E})$.
	\begin{enumerate}[label = ({\roman*})] \itemsep0em
		\item $u(X(t)) - u(X(0))$ is a semimartingale,
		\item For any relatively compact open set $G \subset \hat E$, there exists a positive constant $C_G$ such that 
		\begin{equation} \label{eqn:semimrt_equiv}
		|\mathcal{E}(u,v)| \leq C_G\|v\|_\infty, \quad \forall v \in \mathcal{D}_G,
		\end{equation}
	\end{enumerate}
\end{thm}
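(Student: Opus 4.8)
This is the semimartingale characterisation of \cite{fukushima1999semi}, and the natural route is through the Fukushima decomposition together with the Revuz correspondence between positive continuous additive functionals (PCAFs) and smooth measures. Since $u \in D(\mathcal{E})$, the Fukushima decomposition gives, $\P_x$-a.s.\ for quasi-every $x$,
\[
u(X(t)) - u(X(0)) = M^{[u]}(t) + N^{[u]}(t), \qquad t \ge 0,
\]
where $M^{[u]}$ is a martingale additive functional of finite energy (so, after localisation along exit times from relatively compact sets, a continuous local martingale) and $N^{[u]}$ is a continuous additive functional of zero energy. Being of zero energy, $N^{[u]}$ contributes nothing to the quadratic variation, so $u(X(\cdot)) - u(X(0))$ is a semimartingale if and only if $N^{[u]}$ is of locally bounded variation. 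The whole proof therefore reduces to translating ``$N^{[u]}$ locally of bounded variation'' into the analytic bound (ii).

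\textbf{Direction (ii) $\Rightarrow$ (i).} Assume the bound. On each relatively compact open $G \subset \hat E$ the linear functional $v \mapsto \mathcal{E}(u,v)$, $v \in \mathcal{D}_G$, is bounded in uniform norm; since $\mathcal{D} = C_c^\infty(E)$ is dense in $C_c(G)$ for $\|\cdot\|_\infty$ (mollification), it extends to a bounded functional on $C_c(G)$ and is represented by a signed Radon measure $\nu_G$ via $\mathcal{E}(u,v) = -\int_E v \, d\nu_G$. These are consistent in $G$ and patch to a signed Radon measure $\nu$ on $E$ with Jordan decomposition $\nu = \nu^+ - \nu^-$. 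The crucial point is that $\nu$ is \emph{smooth}, i.e.\ $\nu^\pm$ charge no set of zero capacity: for compact $K \subset G$ with $\capac(K) = 0$ pick $v_n \in \mathcal{D}$, $v_n \ge 0$, $v_n \ge 1$ near $K$, with $\mathcal{E}(v_n,v_n) + (v_n,v_n)_{L^2(E;\mu)} \to 0$, and combine $|\int v_n \, d\nu| = |\mathcal{E}(u,v_n)| \le \mathcal{E}(u,u)^{1/2}\mathcal{E}(v_n,v_n)^{1/2} \to 0$ with the mutual singularity of $\nu^\pm$ (restricting to a Hahn set) to get $|\nu|(K) = 0$; cf.\ \cite{fukushima2010dirichlet}. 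By the Revuz correspondence there are PCAFs $A^\pm$ with Revuz measures $\nu^\pm$; setting $N := -(A^+ - A^-)$, the uniqueness of the zero-energy part in the Fukushima decomposition (the additive functional whose associated ``drift measure'' is $\nu$) forces $N = N^{[u]}$. Hence $N^{[u]}$ is locally of bounded variation and $u(X(\cdot)) - u(X(0))$ is a semimartingale.

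\textbf{Direction (i) $\Rightarrow$ (ii).} Conversely, if $u(X(\cdot)) - u(X(0))$ is a semimartingale then $N^{[u]} = (u(X(\cdot)) - u(X(0))) - M^{[u]}$ is a continuous additive semimartingale; having zero energy (so vanishing bracket, using the absolute continuity established in Corollary~\ref{cor:abs_continuity}) its martingale part is trivial, so $N^{[u]}$ is a CAF of locally bounded variation, $N^{[u]} = -(A^+ - A^-)$ with $A^\pm$ PCAFs carrying smooth Revuz measures $\nu^\pm$. For relatively compact open $G$ the functionals $A^\pm$ stopped at the exit time from $G$ have finite $\P_x$-expectation, so $\nu^\pm(\bar G) < \infty$; the relation between the drift part of the decomposition and $\mathcal{E}$ yields $\mathcal{E}(u,v) = -\int_E v \, d(\nu^+ - \nu^-)$ for $v \in \mathcal{D}_G$, whence $|\mathcal{E}(u,v)| \le \bigl(\nu^+(\bar G) + \nu^-(\bar G)\bigr)\|v\|_\infty$, which is (ii) with $C_G = \nu^+(\bar G) + \nu^-(\bar G)$.

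The technical heart — and the main obstacle — is the (ii) $\Rightarrow$ (i) direction: upgrading the sup-norm bound on $\mathcal{D}_G$ to a signed Radon measure, proving that measure is smooth so that the Revuz correspondence applies, and then \emph{identifying} the resulting difference of PCAFs with $N^{[u]}$, which must be done via the uniqueness in the Fukushima decomposition rather than by direct computation. The remaining ingredients (density of $C_c^\infty$ in $C_c(G)$, local finiteness of $\nu^\pm$, vanishing bracket of $N^{[u]}$) are routine given the regularity of $(\mathcal{E},D(\mathcal{E}))$ and Corollary~\ref{cor:abs_continuity}.
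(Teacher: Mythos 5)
This statement is not proved in the paper at all: it is quoted verbatim (as Theorem~\ref{thm:semimrt_character}) from Fukushima's 1999 paper precisely so that it can be \emph{applied} in Proposition~\ref{prop:X_semi}, so there is no internal proof to compare against. Your sketch follows, in outline, the argument of the cited source itself: Fukushima decomposition $u(X)-u(X(0))=M^{[u]}+N^{[u]}$, reduction to ``$N^{[u]}$ locally of bounded variation,'' representation of the functional $v\mapsto\mathcal{E}(u,v)$ by a signed Radon measure, smoothness of that measure, and the Revuz correspondence, and as such it is essentially correct as a proof strategy. Be aware, though, that the steps you label routine are where the real work sits: showing $|\nu|$ (not just $\nu$) charges no zero-capacity set requires combining the Hahn decomposition with continuous core functions of small $\mathcal{E}_1$-norm supported near the exceptional compact (quasi-continuous versions cannot yet be integrated against $\nu$ at that stage); the identification $N=N^{[u]}$ is not obtained from bare uniqueness but by verifying that $u(X)-u(X(0))+A^+-A^-$ is a martingale additive functional of finite energy and that $A^+-A^-$ has zero energy; and in the converse direction the triviality of the martingale part follows from the energy identity for martingale additive functionals (zero energy forces $\mu_{\langle M\rangle}=0$), not from ``vanishing bracket'' per se, while local finiteness of the Revuz measures near $\bar G$ needs the expected-total-variation-up-to-exit-time argument carried out in the reference.
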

Since $X$ may, in general, explode in finite time the semimartingale property of $X$ will only hold prior to this random time. To precisely state the result let $\{U_n\}_{n \in \N}$ be a sequence of open relatively compact sets in $E$ such that $\cup_n U_n = E$ and $\bar U_n \subset U_{n+1}$. We define
\begin{align*}
	\zeta_n & := \inf\{t \geq 0: X(t) \in U_n^c\}, \quad n \in \N,\\
	\zeta & := \inf\{t \geq 0: X(t) = \Theta\}.
\end{align*}
It is clear that $\lim_{n \to \infty} \zeta_n = \zeta$ and we denote by $X^{\zeta_n}_t := X_{t \land \zeta_n}$ the stopped process.

\begin{prop} \label{prop:X_semi}
	For each $n \in \N$, $X^{\zeta_n}$ is a semimartingale. Moreover for each $x \in E$ under $\P_{x}$, $X$ satisfies the SDE
	\begin{equation} \label{eqn:X_semimart_decomp}
		dX(t) = b(X(t))dt + c^{1/2}(X(t))dW(t); \quad X(0) = x
	\end{equation}
on the random interval $[0,\zeta)$, where $c^{1/2}(x)$ is a matrix square root of $c(x)$ and $W$ is a $d$-dimensional Brownian motion.
\end{prop}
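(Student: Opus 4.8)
The plan is to apply the semimartingale characterization of Theorem~\ref{thm:semimrt_character} coordinatewise, then identify the characteristics. Fix $n \in \N$ and $i \in \{1,\dots,d\}$. The coordinate function $x \mapsto x_i$ is not in $D(\mathcal{E})$ (it is not even in $L^2(E;\mu)$ when $\mu$ is infinite), so the first step is to localize: choose a cutoff $\varphi_n \in C_c^\infty(E)$ with $\varphi_n \equiv 1$ on a neighbourhood of $\bar U_n$, and set $u_i := x_i \varphi_n$. Then $u_i \in \mathcal{D} \subset D(\mathcal{E})$, and on $U_n$ we have $u_i(X(t)) = X_i(t)$, so $X_i^{\zeta_n} = u_i(X^{\zeta_n})$ is a semimartingale as soon as $u_i(X(\cdot))$ is. To verify condition (ii) of Theorem~\ref{thm:semimrt_character} for $u_i$, I would estimate, for a relatively compact open $G \subset \hat E$ and $v \in \mathcal{D}_G$,
\[
|\mathcal{E}(u_i,v)| = \left|\frac{1}{2}\int_E \nabla u_i^\top c\, \nabla v\, p\right|
= \left|\frac{1}{2}\int_E v \diver(c \nabla u_i\, p)\right| \le C_G \|v\|_\infty,
\]
where the middle equality is the integration by parts formula of Theorem~\ref{thm:IBP} (applicable since $u_i \in C^\infty_c(E) \subset C^2_{\mathcal{T},c}(E)$, noting $\diver(c\nabla u_i p)$ is an $L^1$ function supported in a fixed compact set by Assumption~\ref{ass:inputs}), and the final bound holds because $v$ is supported in $G$ and $\diver(c\nabla u_i p) \in L^1$. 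This yields that $u_i(X(\cdot))$, hence $X_i^{\zeta_n}$, is a semimartingale; since $i$ was arbitrary, $X^{\zeta_n}$ is a semimartingale.

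Next I would identify the decomposition. Applying the martingale problem \eqref{eqn:MP} to $u_i$ gives that $M^{[u_i]}(t) = u_i(X(t)) - u_i(x) - \int_0^t Lu_i(X(s))\,ds$ is a martingale. On the stochastic interval $[0,\zeta_n)$ we have $u_i(X(t)) = X_i(t)$ and, because $\varphi_n \equiv 1$ near $\bar U_n$, $Lu_i = b_i$ there; hence
\[
X_i(t \wedge \zeta_n) = x_i + \int_0^{t\wedge\zeta_n} b_i(X(s))\,ds + M_i^{\zeta_n}(t)
\]
for a continuous local martingale $M_i^{\zeta_n}$. The quadratic covariation $\langle M_i^{\zeta_n}, M_j^{\zeta_n}\rangle$ is computed in the usual way from the martingale problem applied to $u_i u_j$ (again suitably cut off): $\langle M_i, M_j\rangle_t = \int_0^{t\wedge\zeta_n} c_{ij}(X(s))\,ds$. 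Letting $n \to \infty$ and using $\zeta_n \uparrow \zeta$, we obtain on $[0,\zeta)$ a continuous semimartingale with drift $b(X(s))\,ds$ and quadratic covariation matrix $c(X(s))\,ds$. A standard martingale representation argument (enlarging the space if necessary to supply an independent Brownian motion when $c$ is degenerate, though here $c \in \mathbb{S}^d_{++}$ so $c^{1/2}$ is invertible and $W := \int_0^{\cdot} c^{-1/2}(X(s))\,dM(s)$ works directly) produces a $d$-dimensional Brownian motion $W$ with $dM(t) = c^{1/2}(X(t))\,dW(t)$, giving \eqref{eqn:X_semimart_decomp}.

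The main obstacle I anticipate is the interplay between the cutoff and the localization sequence: one must be careful that the exceptional set on which the martingale problem or the IBP formula might fail is genuinely polar / $\mu$-null and does not interfere, and that the identification $Lu_i = b_i$ on $U_n$ is valid despite $b$ and $\diver c$ being defined only up to Lebesgue-null sets (this is where Assumption~\ref{ass:inputs} and the absolute continuity of the transition function from Corollary~\ref{cor:abs_continuity} are essential — they guarantee $X$ spends zero time in any $\mu$-null set, so the $ds$-integrals are unambiguous). A secondary technical point is justifying the passage $n \to \infty$ to patch the local decompositions into a single one on $[0,\zeta)$, which is routine given the consistency of the stopped processes.
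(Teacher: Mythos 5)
Your proposal is correct and follows essentially the same route as the paper: localize the coordinate function with a smooth cutoff $x_i\varphi_n$, verify condition (ii) of Theorem~\ref{thm:semimrt_character} via the integration by parts formula of Theorem~\ref{thm:IBP}, and then identify drift and covariation through the martingale problem before letting $n \to \infty$. The extra detail you supply (computing $\langle M_i,M_j\rangle$ from $u_iu_j$, constructing $W$ via $c^{-1/2}$, and noting that Corollary~\ref{cor:abs_continuity} makes the a.e.-defined $b$ unambiguous in the time integrals) simply fleshes out steps the paper's proof leaves terse.
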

\begin{proof}
	Since the semimartingale property of a vector valued process is defined coordinate-wise we prove that $X_i^{\zeta_n}$ is a semimartingale for every $i=1,\dots,d$ and $n \in \N$. Fix such indices $i,n$ and let $\phi_n: E \to [0,\infty)$ be a smooth cutoff function such that $\phi_n = 1$ on $U_n$ and $\phi_n = 0$ on $E\setminus U_{n+1}$. Let \[C_n := \sup_{x \in E} |\sum_{|\alpha|\leq 2} \partial^\alpha \phi_n(x)|\] where $\alpha  \in \N^d$ is a multi-index and $|\alpha| = \sum_{i=1}^d \alpha_i$. Note that $C_n < \infty$. Next define $u_i^n(x) :=x_i\phi_n(x)$. It is clear that $u_i^n(x) = x_i$ on $U_n$ and $u_i^n\in D(\mathcal{E})$. Since $X_i^{\zeta_n}(t) = X_i(t)$ for $t \in [0,\zeta_n)$ to prove the claim, by Theorem~\ref{thm:semimrt_character}, it suffices to verify  \eqref{eqn:semimrt_equiv} for $u_i^n$. To this end fix  a relatively compact open set $G$ and $v \in \mathcal{D}_G$. Then using \eqref{eqn:IBP} we see that 
	$$|\mathcal{E}(u_i^n,v)| = \left| \int_{E} vLu_i^np \right| \leq \|v\|_\infty C_n\int_{G \cap U_{n+1}} (|\diver c_i| + |c\nabla \log p_i|)p.$$
	It follows that \eqref{eqn:semimrt_equiv} holds with $C_G = C_n\int_{G \cap U_{n+1}} (|\diver c_i| + |(c\nabla \log p)_i|)p$.

To prove the semimartingale decomposition formula \eqref{eqn:X_semimart_decomp} fix $x \in E$. Choose $n$ large enough so that $x \in U_n$. Then from the semimartingale property of $X^{\zeta_n}$ and Theorem~\ref{thm:strong_feller} we have for $t \in [0,\zeta_n)$ that 
$$dX_i(t) = du_i^n(X(t)) = b(X_i(t))dt + c^{1/2}(X_i(t))dW(t)$$
for some Brownian motion $W$.
Sending $n \to \infty$ we see that \eqref{eqn:X_semimart_decomp} holds for every $x \in E$ and for every $t \in [0,\zeta)$.
\end{proof}

\section{The Study of Collisions} \label{sec:collisions}
In this section we study particle collisions.
First we establish that collisions only occur at a nullset of time points.

\begin{lem} \label{lem:coliision time_set}
	The set $\{t \in [0,\zeta): X_i(t) = X_j(t) \text{ for some } i \ne j\}$ is $\P_x$-a.s a Lebesgue null-set for every $x \in E$.
\end{lem}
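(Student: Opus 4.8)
The plan is to apply Fubini's theorem to exchange the (random) time integral with expectation, reducing the assertion to a property of the one-dimensional marginals $p_t(x,\cdot)$, and then to invoke the absolute continuity condition of Corollary~\ref{cor:abs_continuity}. Write $\Delta := \{y \in E : y_i = y_j \text{ for some } i \neq j\}$ for the set of collision configurations, and note that the set appearing in the statement equals $A := \{t \geq 0 : X(t) \in \Delta\}$: indeed, for $t \geq \zeta$ we have $X(t) = \Theta \notin \Delta$, so the restriction to $[0,\zeta)$ is automatic and $\{X(t)\in\Delta\} \subseteq \{t < \zeta\}$.

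First I would use Tonelli's theorem, together with the measurability of $(t,\omega)\mapsto \I_{\{X(t)\in\Delta\}}$ (which holds since $X$ has continuous paths and $\Delta \in \mathcal{B}(E)$), to write
\[
	\E_x\big[\mathrm{Leb}(A)\big] = \E_x\left[\int_0^\infty \I_{\{X(t)\in\Delta\}}\,dt\right] = \int_0^\infty \P_x\big(X(t)\in\Delta\big)\,dt = \int_0^\infty p_t(x,\Delta)\,dt,
\]
where $\mathrm{Leb}$ denotes Lebesgue measure on $[0,\infty)$ and the last equality is the definition of the transition function (using again that $\Theta \notin \Delta$). Next I would observe that $\Delta$ is a finite union of the sets $E \cap \{y_i = y_j\}$, each of which is contained in a hyperplane of $\R^d$; hence $\Delta$ is a Lebesgue null set in $\R^d$, and since $d\mu = p\,dx$ we also get $\mu(\Delta) = 0$. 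Corollary~\ref{cor:abs_continuity} gives $p_t(x,\cdot)\ll\mu$ for every $x \in E$ and every $t > 0$, so $p_t(x,\Delta) = 0$ for all $t>0$. Substituting into the identity above yields $\E_x[\mathrm{Leb}(A)] = 0$, and since $\mathrm{Leb}(A)\geq 0$ this forces $\mathrm{Leb}(A) = 0$ $\P_x$-a.s., which is exactly the claim.

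There is no real obstacle here: the lemma is a short consequence of Corollary~\ref{cor:abs_continuity}. The only minor points worth stating with care are the measurability needed to apply Tonelli and the remark that the cemetery state lies outside $\Delta$, which identifies $\P_x(X(t)\in\Delta)$ with $p_t(x,\Delta)$ and makes the cut-off at the explosion time transparent. (One could equivalently argue directly via the symmetry of $P_t$ on $L^2(E;\mu)$, as in the proof of Corollary~\ref{cor:abs_continuity}, but routing through the absolute continuity condition is cleanest.)
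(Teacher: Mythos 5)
Your proof is correct, but it is a genuinely different argument from the one in the paper. You reduce the statement to the one-dimensional marginals: by Tonelli, $\E_x[\mathrm{Leb}(A)]=\int_0^\infty p_t(x,\Delta)\,dt$, and since $\Delta$ lies in finitely many hyperplanes it is Lebesgue-null, hence $\mu$-null, so the absolute continuity condition of Corollary~\ref{cor:abs_continuity} kills every term with $t>0$ (the single point $t=0$ being $dt$-negligible even when $x\in\Delta$); your handling of the cemetery state, which makes the cut-off at $\zeta$ automatic, is also fine. The paper instead argues pathwise with stochastic calculus: it applies the occupation density formula to the semimartingale $X_i-X_j$ (available on $[0,\zeta)$ by Proposition~\ref{prop:X_semi}) with $f=1_{\{0\}}$, and uses the strict positivity of $c_{ii}-2c_{ij}+c_{jj}$, guaranteed by $c\in\mathbb{S}^d_{++}$, to conclude that the time spent on $\{X_i=X_j\}$ is null. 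The trade-offs: your route is shorter, needs neither the semimartingale decomposition nor the nondegeneracy of $c$ beyond what Corollary~\ref{cor:abs_continuity} already encodes, and proves the stronger statement that $X$ spends Lebesgue-null time in \emph{any} Lebesgue-null subset of $E$; the paper's route stays within local-time machinery that is in any case needed for the ranked dynamics \eqref{eqn:rank_based_dynamics} and does not invoke the $L^p$-strong Feller/absolute continuity apparatus, so it would survive even without that regularity. Either proof is acceptable here.
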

\begin{proof} Fix indices $i \ne j$. Note that $d\langle X_i-X_j\rangle(t) = (c_{ii}(X(t)) - 2c_{ij}(X(t)) + c_{jj}(X(t)))dt$. The occupation density formula then yields
	$$\int_0^t f(X_i(s) - X_j(s)))(c_{ii}(X(s)) - 2c_{ij}(X(s)) + c_{jj}(X(s)))ds = \int_{\R} f(a)L^a_{X_i - X_j}(t)da$$
	for every bounded measurable function $f$. Taking the function $f(a) = 1_{\{0\}}(a)$ we obtain 
	\begin{equation} \label{eqn:occ_dens}
	\int_0^t 1_{\{X_i(s) = X_j(s)\}}(c_{ii}(X(s)) - 2c_{ij}(X(s)) + c_{jj}(X(s)))ds = 0.
	\end{equation}
	Since $c(x) \in \mathbb{S}^d_{++}$ we have that $c_{ii}(x) - 2c_{ij}(x) + c_{jj}(x) > 0$ for every $x \in E$.
	Hence, taking expectation in \eqref{eqn:occ_dens} we conclude that $\P_x(X_i(t) = X_j(t)) = 0$ for every $x \in E$ and a.e.~$t
	\in [0,\zeta)$. Noting that
	$$\{t \in [0,\zeta): X_i(t) = X_j(t) \text{ for some } i \ne j\} \subset \bigcup_{i \ne j} \left\{t \in [0,\zeta): X_i(t) = X_j(t)\right\},$$ completes the proof.
\end{proof} 

Now we move on to the more subtle question of triple collisions. To establish this property we will need to introduce the notion of capacity. Set $\mathcal{E}_1(u,v) = (u,v)_{L^2(E;\mu)} + \mathcal{E}(u,v)$ for $u,v \in D(\mathcal{E})$. For any nonempty open set $U \subset E$ let 
$$\mathcal{L}_U:= \{u \in D(\mathcal{E}): u \geq 1 \text{ a.e on } U\}$$ and define
$$ \capac(U) = \inf_{u \in \mathcal{L}_U}\mathcal{E}_1(u,u).$$ For any nonempty set $A \subset E$ set $$\capac(A) = \inf_{U \text{ open, } A \subset U} \capac(U).$$
By convention we set $\capac(\emptyset) = 0$. 
We will first show that the set 
\begin{equation} \label{eqn:triple_collision}
B := \{x \in E: x_i = x_j = x_k \text{ for some distinct indices } i,j,k\}
\end{equation} has zero capacity. This, together with the absolute continuity criterion Corollary~\ref{cor:abs_continuity} will establish that the set $B$ is polar for $X$. 

	\begin{lem}\label{lem:cap0}
		$\mathrm{Cap}(B) = 0$ where $B$ is given by \eqref{eqn:triple_collision}.
	\end{lem}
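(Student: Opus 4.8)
The strategy is to exploit the fact that $B$ has codimension $2$ inside $\R^d$ (it is a finite union of linear subspaces $\{x_i=x_j=x_k\}$, each of codimension $2$), and that sets of codimension $\geq 2$ are known to have zero capacity for "nice" Dirichlet forms — in the classical Sobolev case, Cap is comparable to the $H^1$-capacity, and $(d-2)$-dimensional sets are $H^1$-polar. Since our form $\mathcal E$ has bounded (locally) and uniformly elliptic (locally) coefficient $c$ and density $p$ bounded above and below on compacts, on any relatively compact open set the form $\mathcal E_1$ is comparable to the standard $H^1$ inner product, so it suffices to build good cutoff functions by hand. First I would reduce to a single subspace: since $B = \bigcup_{i<j<k}\{x_i=x_j=x_k\}$ is a finite union and capacity is countably (hence finitely) subadditive, it is enough to show each piece $B_{ijk}:=\{x\in E: x_i=x_j=x_k\}$ has zero capacity. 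Then, since capacity is also countably subadditive and $E=\bigcup_n U_n$ with each $U_n$ relatively compact, it suffices to show $\capac(B_{ijk}\cap U_n)=0$ for each $n$; equivalently, to produce for each $\varepsilon>0$ a function $u_\varepsilon \in D(\mathcal E)$ with $u_\varepsilon \geq 1$ on an open neighborhood of $B_{ijk}\cap \bar U_n$ and $\mathcal E_1(u_\varepsilon,u_\varepsilon) < \varepsilon$.

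The construction of $u_\varepsilon$ is the heart of the matter. After an orthogonal change of coordinates, $B_{ijk}$ is $\{y: y_1 = y_2 = 0\}$ for suitable linear coordinates $y$ (e.g. $y_1 = (x_i - x_j)/\sqrt2$, $y_2 = (x_i+x_j-2x_k)/\sqrt6$), so locally the set looks like $\{0\}\times\R^{d-2}$. The standard logarithmic-cutoff trick for codimension-$2$ sets then applies: for $\delta$ small set $r = \sqrt{y_1^2+y_2^2}$, the distance to the subspace, and define
$$
\psi_\delta(r) = \begin{cases} 1, & r \leq \delta^2,\\[2pt] \dfrac{\log(\delta/r)}{\log(1/\delta)}, & \delta^2 \leq r \leq \delta,\\[2pt] 0, & r \geq \delta,\end{cases}
$$
which is continuous, equals $1$ near $B_{ijk}$, is supported near it, and satisfies $\int |\nabla \psi_\delta|^2\,dy \lesssim 1/\log(1/\delta)\to 0$ as $\delta\to 0$ — the computation being $\int_{\delta^2\leq r\leq\delta} (\log(1/\delta))^{-2} r^{-2}\, r\,dr\,dy_3\cdots dy_d \asymp (\log(1/\delta))^{-2}\cdot \log(\delta^{-1})\cdot |\text{bounded region}|$. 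Multiplying by a fixed smooth compactly supported cutoff $\chi_n\in C_c^\infty(E)$ with $\chi_n\equiv 1$ on $\bar U_n$ gives $u_\delta := \chi_n\,\psi_\delta\in D(\mathcal E)$ (it is Lipschitz with compact support, hence in $D(\mathcal E)$), still $\geq 1$ on a neighborhood of $B_{ijk}\cap\bar U_n$, and by the product rule $\mathcal E_1(u_\delta,u_\delta)\lesssim_n \|\psi_\delta\|_{L^2}^2 + \int|\nabla\psi_\delta|^2 \to 0$, using that on $\supp\chi_n$ both $c$ and $p$ are bounded (so $\mathcal E_1$ is dominated by a constant times the $H^1$ norm) and that $\|\psi_\delta\|_{L^2(\supp\chi_n)}\to 0$ since $\psi_\delta$ is bounded by $1$ and supported on a set shrinking to the null set $B_{ijk}$.

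The main obstacle is a technical one: the functions $\psi_\delta$ are Lipschitz but not smooth, whereas $\mathcal D = C_c^\infty(E)$, so I must confirm membership in the \emph{closure} $D(\mathcal E)$ — this follows because $D(\mathcal E)$ contains all compactly supported Lipschitz functions (standard: mollify, and note the $\mathcal E_1$-norms stay bounded so the mollifications converge weakly, hence the limit lies in $D(\mathcal E)$; alternatively this is a consequence of the form being regular with $C_c^\infty$ a core, combined with local boundedness of the coefficients). A second minor point is handling the comparison $\mathcal E_1 \lesssim_n \|\cdot\|_{H^1}^2$: this needs only that $c$ takes values in $\mathbb S^d_{++}$ with entries bounded on the compact set $\supp\chi_n$ (true since $c\in C^1_{\mathcal T}$, hence bounded on compacts by continuity) and $p$ bounded above there (true since $p\in C^1_{\mathcal T}((0,\infty))$ is continuous, hence bounded on compacts); we do \emph{not} need a lower bound on $p$ for this direction. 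Once each $\capac(B_{ijk}\cap U_n)=0$ is established, subadditivity over the finitely many triples $(i,j,k)$ and countably many $n$ yields $\capac(B)=0$.
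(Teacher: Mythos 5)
Your argument is correct, and at the strategic level it follows the same route as the paper: reduce by subadditivity to a single set $B^{ijk}=\{x\in E: x_i=x_j=x_k\}$, localize to relatively compact pieces staying away from $\partial E$ so that $c$ and $p$ are bounded there, and kill the capacity with an explicit logarithmic cutoff concentrated near this codimension-two set. The genuine difference is the cutoff itself, and it matters. You pass to orthonormal transverse coordinates, use the Euclidean transverse distance $r$, and run the transition over the wide band $\delta^2\le r\le\delta$, giving the classical estimate $\int|\nabla\psi_\delta|^2\lesssim(\log(1/\delta))^{-2}\int_{\delta^2}^{\delta}r^{-1}\,dr = 1/\log(1/\delta)\to 0$; the Jacobian factor $r\,dr$ from the two transverse dimensions against $r^{-2}$ is exactly what produces the single logarithm. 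The paper instead uses the max-type gauge $f$ with a multiplicatively thin band $\epsilon\le f\le \epsilon(1+\epsilon)$, and its resulting bound \eqref{eqn:capacity_bound}, of order $\epsilon/(\log(1+\epsilon))^2\sim 1/\epsilon$, does not, as printed, tend to zero as $\epsilon\downarrow 0$; your wide-band version is the standard estimate that the paper's argument evidently intends, so your route is the more robust one. You are also more careful on a point the paper leaves implicit: the cutoffs are Lipschitz but not in $\mathcal D=C_c^\infty(E)$, and your mollification/weak-compactness argument that compactly supported Lipschitz functions belong to $D(\mathcal E)$ (using only upper bounds on $c$ and $p$ on compacts) closes that gap. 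Two cosmetic adjustments: take $\chi_n\equiv 1$ on an open neighbourhood of $\bar U_n$ (e.g.\ on $U_{n+1}$) so that $u_\delta\ge 1$ on an \emph{open} neighbourhood of $B^{ijk}\cap\bar U_n$, as the definition of the capacity of a non-open set requires; and note that your countable subadditivity over the exhaustion $\{U_n\}$ simply replaces the paper's appeal to Choquet continuity from below for the sets $B^{ijk}_n$, which is an equally valid reduction.
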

	\begin{proof}
			For distinct indices $i,j,k \in \{1,\dots,d\}$ define the sets
		\begin{equation} \label{eqn:triple_collision_set}
		B^{ijk} := \{x \in E:  x_i = x_j = x_k \}.
		\end{equation} 
	Since $B$ is contained in a finite union of such sets it suffices to show that $\mathrm{Cap}(B^{ijk}) = 0$ for any distinct indices $i,j,k$. Fix such a triple of indices and assume that $B^{ijk} \ne \emptyset$, as the result trivially holds if it is empty. Note then that $B^{ijk}_n \uparrow B^{ijk}$ as $n \to \infty$ where
		$$B^{ijk}_n := \{x \in E: x_i = x_j =x_k, \ \mathrm{dist}(x,\partial E) > 1/n\} \cap Q_n$$
		and $Q_n = \{x \in \R^d: |x_l| < n, \ \forall l \in \{1,\dots,d\}\}$. Here the boundary $\partial E$ is understood to be in the topology on $\R^d$.
		If $\partial E = \emptyset$ then set dist$(x,\partial E) = \infty$ for every $x \in E$.
		By properties of a Choquet capacity \cite[Theorem~2.1.1]{fukushima2010dirichlet} we have Cap($B^{ijk}$) = $\sup_n$Cap($B^{ijk}_n$), so it suffices to prove that Cap($B^{ijk}_n$) $=0$ for every $n$.
		
		 Now fix $n$ and $\epsilon$ such that $0 < \epsilon(1+\epsilon) < 1/n$. Define 
		$$U^\epsilon_n = \{x \in E \ | \   |x_l - x_m| \in (-\epsilon,\epsilon)  \text{ for } l,m \in \{i,j,k\}, \ \mathrm{dist}(x,\partial E)  > 1/n - \epsilon\}  \cap Q_{n+\epsilon}.$$
		Each $U^\epsilon_n$ is open in $E$ and contains  $B^{ijk}_n$. Define the function
		$f: E \to \R$ given by 
		$$f(x) = \max\left\{\max_{\substack{ l,m \in \{i,j,k\} \\ l \ne m}}\left\{|x_l - x_m| \right\}, 1/n- \mathrm{dist}(x,\partial E), \max_{l \in \{1,\dots,d\}}|x_l| - n \right\}$$ and the
		logarithmic cutoff function $\eta: E \to \R$ via $$\eta(x) = -  \frac{\log \(\frac{f(x)}{\epsilon(1+\epsilon)}\)}{\log(1+\epsilon)} \land 1 \lor 0.$$
		
		It is clear that $\eta=1$ on $U^\epsilon_n$ and $\eta = 0$ on $E \setminus U^{\epsilon(1+\epsilon)}_n$. Additionally we have $|\nabla \eta(x)| \leq  (\log (1+\epsilon))^{-1} f(x)^{-1}$ for almost every $x \in U_n^{\epsilon(1+\epsilon)}\setminus U_n^{\epsilon}$. Since $\tr(c)$ and $p$ are both bounded on the set $\{\mathrm{dist}(x,\partial E) > 2/n\} \cap Q_{2n}$ by some constant $C_n$ we see that 
		\begin{align} \mathrm{Cap}(U_n^\epsilon) \leq \mathcal{E}_1(\eta,\eta) & = \int_{E} \eta^2 p + \frac{1}{2}\int_{E} \nabla \eta^\top c \nabla \eta p  \nonumber \\
		& \leq C_n \lambda^{d}(U^{\epsilon(1+\epsilon)}_n) + \frac{d}{2}C_n^2(\log (1+\epsilon))^{-2} \int_{U^{\epsilon(1+\epsilon)}_n\setminus U^\epsilon_n} f^{-2} \label{eqn:capacity_cal},
		\end{align}
	where $\lambda^d$ is the Lebesgue measure on $\R^d$.
	Note that $f^{-2}(x) \leq |x_i-x_j|^{-2}$ for $x \in U^{\epsilon(1+\epsilon)}_n\setminus U^\epsilon_n$ and we have
$$ U^{\epsilon(1+\epsilon)}_n\setminus U^\epsilon_n \subseteq Q_{2n} \cap \{x: \epsilon \leq |x_i - x_j| \leq \epsilon(1+\epsilon)\} \cap \{x: -\epsilon(1+\epsilon) + x_i \leq x_k \leq \epsilon(1+\epsilon) + x_i \}.$$
Hence we obtain the estimate 
\begin{align*} \int_{U^{\epsilon(1+\epsilon)}_n\setminus U^\epsilon_n} f^{-2} & \leq (4n)^{d-3} \int_{-2n}^{2n}\int_{ -\epsilon(1+\epsilon) + x_i}^{ \epsilon(1+\epsilon)+x_i}\int_{\{\epsilon \leq |x_i - x_j| \leq \epsilon(1+\epsilon)\}} |x_i - x_j|^{-2}dx_jdx_kdx_i \\
	& = 2^{2d-3}n^{d-2} \epsilon(1+\epsilon)\(\frac{1}{\epsilon} - \frac{1}{\epsilon(1+\epsilon)}\) = 2^{2d-3}n^{d-2}\epsilon.
	\end{align*}
Plugging this into \eqref{eqn:capacity_cal} yields the bound
\begin{equation} \label{eqn:capacity_bound}
\mathrm{Cap}(U_n^\epsilon) \leq \tilde C_n\( \frac{\epsilon}{\log (1+\epsilon)^2} + \lambda^{d}(U_n^{\epsilon(1+\epsilon)})\)
\end{equation}
for some constant $\tilde C_n > 0$. Since the right hand side of \eqref{eqn:capacity_bound} converges to $0$ as $\epsilon \downarrow 0$ we obtain
$$\mathrm{Cap}(B^{ijk}_n) \leq \lim_{\epsilon \downarrow 0} \text{Cap}(U^\epsilon_n) = 0.$$
This completes the proof.
\end{proof}

We are now ready to establish that triple collisions do not occur.

\begin{thm} [No Triple Collisions] \label{thm:triple_collision} We have
	$$\P_x(X(t) \in B\text{ for some } t > 0) = 0$$
	for every $x \in E$, where $B$ is given by \eqref{eqn:triple_collision}.
\end{thm}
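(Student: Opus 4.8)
The plan is to deduce the statement from the two facts already established: $\capac(B)=0$ (Lemma~\ref{lem:cap0}) and the absolute continuity of the transition function (Corollary~\ref{cor:abs_continuity}). Set $\sigma_B:=\inf\{t>0:X(t)\in B\}$, which is an $(\F(t))$-stopping time since $X$ is a Hunt process and $B$ is Borel, and note that $\{X(t)\in B\ \text{for some}\ t>0\}=\{\sigma_B<\infty\}$ by definition of $\sigma_B$. Because $\capac(B)=0$, the general theory of regular Dirichlet forms (see \cite[Chapter~4]{fukushima2010dirichlet}) tells us that $B$ is exceptional, and hence contained in a Borel properly exceptional set $\tilde B\supseteq B$; by definition $\mu(\tilde B)=0$ and $\P_y(X(t)\in\tilde B\ \text{for some}\ t\ge 0)=0$, and in particular $\P_y(\sigma_B<\infty)=0$, for every $y\in E\setminus\tilde B$. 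This proves the theorem for every starting point $x\in E\setminus\tilde B$.

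It remains to treat the starting points $x\in\tilde B$, which in particular includes the case $x\in B$. For these I would apply the Markov property at a fixed time $s>0$: writing $h(y):=\P_y(\sigma_B<\infty)$, which vanishes on $E\setminus\tilde B$ by the previous step, and using that the event $\{X(t)\in B\ \text{for some}\ t>s\}$ is the time-$s$ shift of $\{X(t)\in B\ \text{for some}\ t>0\}$, we obtain
\[
\P_x\big(X(t)\in B\ \text{for some}\ t>s\big)=\E_x\big[h(X(s))\big]=\int_{\tilde B} p_s(x,dy)\,h(y)\le p_s(x,\tilde B),
\]
where the integral is restricted to $\tilde B$ because $h\equiv 0$ off $\tilde B$, and the inequality uses $h\le 1$. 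By Corollary~\ref{cor:abs_continuity} we have $p_s(x,\cdot)\ll\mu$, and since $\mu(\tilde B)=0$ this forces $p_s(x,\tilde B)=0$. Hence $\P_x(X(t)\in B\ \text{for some}\ t>s)=0$ for every $s>0$, and since $\{X(t)\in B\ \text{for some}\ t>0\}=\bigcup_{n\ge1}\{X(t)\in B\ \text{for some}\ t>1/n\}$ is an increasing union, letting $n\to\infty$ gives $\P_x(X(t)\in B\ \text{for some}\ t>0)=0$ for every $x\in E$.

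The substance of the proof is really contained in Lemma~\ref{lem:cap0} and Corollary~\ref{cor:abs_continuity}; what is left is the routine chain ``zero capacity $\Rightarrow$ exceptional $\Rightarrow$ polar.'' The one point that genuinely requires care --- and the reason the absolute continuity condition is needed at all --- is the upgrade from \emph{quasi-every} starting point to \emph{every} starting point, which must also cover $x$ lying in the exceptional set $\tilde B$ (in particular $x\in B$); the Markov-property step at time $s>0$ together with $p_s(x,\cdot)\ll\mu$ and $\mu(\tilde B)=0$ is exactly the device that closes this gap, and one should also remain careful about the $t>0$ versus $t\ge 0$ distinction when passing to the limit.
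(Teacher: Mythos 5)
Your proof is correct and follows essentially the same route as the paper: zero capacity (Lemma~\ref{lem:cap0}) gives that $B$ is exceptional, and the absolute continuity condition (Corollary~\ref{cor:abs_continuity}) upgrades ``polar for quasi-every starting point'' to ``polar for every starting point.'' The only difference is that where the paper simply cites \cite[Theorems~4.2.1 and~4.2.4]{fukushima2010dirichlet}, you reproduce the proof of the latter (the Markov-property-at-time-$s$ argument with $p_s(x,\cdot)\ll\mu$ and $\mu(\tilde B)=0$), which is exactly the standard argument behind that citation.
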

\begin{proof}
	Since all compact sets in $E$ clearly have finite capacity it follows from  \cite[Theorem~4.2.1~(ii)]{fukushima2010dirichlet} that a set has zero capacity if and only if it is exceptional (in the sense of \cite[Page~152]{fukushima2010dirichlet}). Thanks to the absolute continuity condition Corollary~\ref{cor:abs_continuity}, it follows from \cite[Theorem~4.2.4]{fukushima2010dirichlet} that all exceptional sets are polar. Thus the result follows from Lemma~\ref{lem:cap0}.
\end{proof}

Using the above theorem we can obtain a simplified expression for the rank-process of $X$. For $x \in \R^d$, denote by $x_{()}$ the \emph{rank-vector} of $x$ defined by the conditions $x_{(1)} \geq x_{(2)} \geq \dots \geq x_{(d)}$ and $\{x_i: i = 1,\dots,d\} = \{x_{(k)}: k=1,\dots,d\}$. Next for each $k \in \{1,\dots,d\}$ define the \emph{rank identifying functions} $r_k:\R^d \to \{1,\dots,d\}$ via $r_k(x) = i$ if $x_{(k)} = x_i$ with ties broken by lexicographical ordering. Then, as a consequence of Lemma~\ref{lem:coliision time_set} and Theorem~\ref{thm:triple_collision}, we obtain that the assumptions of  \cite[Corollary~2.6]{banner2008local} are satisfied, which yields a simplified formula for the dynamics of the ranked market weights:
\begin{cor}
	 For $t \in [0,\zeta)$ the ranked semimartingale process $X_{()}$ has dynamics
\begin{equation} \label{eqn:rank_based_dynamics}
	dX_{(k)}(t) = \sum_{i=1}^d 1_{\{r_k(X(t)) = i\}}dX_i(t) + \frac{1}{4}dL_{X_{(k)} - X_{(k+1)}}(t) - \frac{1}{4}dL_{X_{(k-1)} -
		X_{(k)}}(t)
\end{equation}
for every $k =1,\dots,d$,
with the convention that $L_{X_{(0)} - X_{(1)}} = L_{X_{(d)}-X_{(d+1)}} = 0$.
\end{cor}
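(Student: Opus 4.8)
The plan is to derive \eqref{eqn:rank_based_dynamics} from the general formula for the semimartingale decomposition of the order statistics of a continuous semimartingale due to Banner and Ghomrasni, in the sharpened form of \cite[Corollary~2.6]{banner2008local}. That result outputs precisely \eqref{eqn:rank_based_dynamics} provided two hypotheses hold: (a) the set of times at which two coordinates of $X$ coincide is a Lebesgue-null set, and (b) there are no collisions of three or more coordinates --- equivalently, the triple-collision time set is Lebesgue-null, which additionally forces the local times at $0$ of every non-adjacent gap $X_{(k)}-X_{(l)}$, $l\ge k+2$, to vanish. Hence the content of the proof is to extract (a) and (b) from the results already proved and to localize at the explosion time $\zeta$.

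For (a) I would cite Lemma~\ref{lem:coliision time_set}. For (b), Theorem~\ref{thm:triple_collision} gives the stronger conclusion that, $\P_x$-a.s., $X(t)\notin B$ for all $t>0$, where $B$ is the triple-collision set \eqref{eqn:triple_collision}. This has two uses. First, the triple-collision time set is then empty, so hypothesis (b) holds in its strongest form. Second, for any $k$ and any $l\ge k+2$ we get $X_{(k)}(t)>X_{(l)}(t)$ for all $t>0$ a.s.: an equality would squeeze $X_{(k)}(t)=X_{(k+1)}(t)=X_{(k+2)}(t)$ and hence put $X(t)\in B$. So the nonnegative gap processes attached to non-adjacent ranks never reach $0$, their local times at $0$ are identically zero, and the only surviving local-time terms in the Banner--Ghomrasni expansion are $L_{X_{(k)}-X_{(k+1)}}$ and $L_{X_{(k-1)}-X_{(k)}}$, with the stated convention $L_{X_{(0)}-X_{(1)}}=L_{X_{(d)}-X_{(d+1)}}=0$. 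I would also remark that, since the set of times at which $X$ has a tie is null both for $dt$ and for the quadratic variation of the local-martingale part of each $X_i$ (the occupation-density argument of Lemma~\ref{lem:coliision time_set}), the lexicographic tie-breaking used to define the rank-identifying functions $r_k$ does not affect the integrals $\int_0^t 1_{\{r_k(X(s))=i\}}\,dX_i(s)$, so the term as written is unambiguous.

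Since $X$ is only a semimartingale on $[0,\zeta)$, I would apply \cite[Corollary~2.6]{banner2008local} to the stopped process $X^{\zeta_n}$, which by Proposition~\ref{prop:X_semi} is a bona fide continuous semimartingale on all of $[0,\infty)$. Hypotheses (a) and (b) still hold for $X^{\zeta_n}$: on $[0,\zeta_n)$ they are inherited from above, and on $[\zeta_n,\infty)$ the path is constant, so --- crucially because $X(\zeta_n)\notin B$ a.s.\ by Theorem~\ref{thm:triple_collision} --- no triple collision is introduced; if one wishes to keep (a) literally intact one may instead continue $X$ after $\zeta_n$ by an independent Brownian motion, which a.s.\ creates no new triple collisions. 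Corollary~2.6 then yields \eqref{eqn:rank_based_dynamics} for $X^{\zeta_n}_{()}$ on $[0,\infty)$; restricting to $[0,\zeta_n)$, where $X^{\zeta_n}=X$, and sending $n\to\infty$ with $\zeta_n\uparrow\zeta$ gives the identity on $[0,\zeta)$.

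The step I expect to require the most care is bookkeeping with the normalization of local time. One must check that the Tanaka--Meyer local time fixed in the Notation section (with $\mathrm{sign}(x)=1_{x>0}-1_{x\le 0}$) is the version underlying \cite[Corollary~2.6]{banner2008local}, and in particular account for the factor $2$ relating the one-sided and the symmetric local times at $0$ of the reflected-type gap processes $X_{(k)}-X_{(k+1)}\ge 0$; this is what turns the $\tfrac12$ that typically appears in such formulas into the coefficient $\tfrac14$ in \eqref{eqn:rank_based_dynamics}. The remaining ingredients --- vanishing of the non-adjacent gap local times, irrelevance of the tie-breaking rule, and the localization at $\zeta$ --- are routine given Lemma~\ref{lem:coliision time_set}, Theorem~\ref{thm:triple_collision} and Proposition~\ref{prop:X_semi}.
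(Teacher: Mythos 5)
Your proposal is correct and follows the same route as the paper, which simply invokes \cite[Corollary~2.6]{banner2008local} after noting that its hypotheses are supplied by Lemma~\ref{lem:coliision time_set} and Theorem~\ref{thm:triple_collision}. The additional details you supply --- localization at $\zeta$ via the stopped processes $X^{\zeta_n}$, the vanishing of non-adjacent gap local times, and the local-time normalization --- are sensible elaborations of what the paper leaves implicit.
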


\section{Ergodicity and Occupation Times} \label{sec:ergodicty}
Next we turn to the question of ergodicity for both $X$ and the rank process $X_{()}$ as well as investigating how much time, asymptotically, $X$ spends in each region $E_{\tau}$.  The assumption $p > 0$ on $E$ implies that the Dirichlet form $\mathcal{E}$ is irreducible so it follows that $\mathcal{E}$ is ergodic if and only if it is recurrent. For more details on the notions of irreducibility, recurrence and ergodicity of Dirichlet forms we refer the reader to \cite[Section~1.6]{fukushima2010dirichlet}. It is clear that for the Markov process $X$ to be recurrent it cannot explode in finite time, which motivates the following assumption.
\begin{assum}
	\label{ass:non_explosion}
	$\P_x(\zeta < \infty) = 0$ for every $x \in E$.	
\end{assum}
Assumption~\ref{ass:non_explosion} is equivalent to $\mathcal{E}$ being conservative  \cite[Exercise~4.5.1]{fukushima2010dirichlet}. As such, there are test function methods available (see \cite[Theorem~1.6.6]{fukushima2010dirichlet}) to establish whether or not Assumption~\ref{ass:non_explosion} holds for specific choices of domain $E$ and inputs $(c,p)$. In Proposition~\ref{prop:suff_cond} below we establish a sufficient condition on the inputs $(c,p)$ for this assumption to hold when the domain is the simplex. 

It turns out that if $\mu$ is a finite measure then the conservativity property Assumption~\ref{ass:non_explosion} is both necessary and sufficient to establish recurrence. Indeed, it is always the case  that recurrence implies conservativity \cite[Lemma~1.6.5]{fukushima2010dirichlet}, while the fact that conservativity implies recurrence in this context follows by virtue of \cite[Theorem~6.3.2]{fukushima2010dirichlet} with the choice $\phi \equiv 1$ (which is admissible by the finiteness of $\mu$) in the notation of that theorem. As a consequence we obtain the following Birkhoff ergodic theorem:
\begin{prop}[{\citealp[Theorem 6.3.3(iii)]{fukushima2010dirichlet}}] \label{prop:ergodic}
	Let Assumption~\ref{ass:non_explosion} hold and assume $\mu$ is a finite measure. Then 
	\begin{equation} \label{eqn:ergodic}
		\lim_{T\to \infty} \frac{1}{T}\int_0^T f(X(t))dt = \frac{1}{\mu(E)} \int_{E}fp; \quad \P_x\text{-a.s}
	\end{equation}
	for every $f \in L^1(E;\mu)$ and $x \in E$.
\end{prop}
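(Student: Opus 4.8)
The plan is to derive the statement from \cite[Theorem~6.3.3(iii)]{fukushima2010dirichlet} and then upgrade its conclusion from quasi-every starting point to \emph{every} starting point by invoking the absolute continuity condition Corollary~\ref{cor:abs_continuity}. First I would record that $(\mathcal{E},D(\mathcal{E}))$ is irreducible: this is precisely the consequence of $p>0$ on $E$ noted in the discussion preceding the statement (any $\mathcal{E}$-invariant set is $\mu$-trivial when $p>0$). Next, since $\mu$ is finite and Assumption~\ref{ass:non_explosion} is equivalent to conservativity of $\mathcal{E}$ via \cite[Exercise~4.5.1]{fukushima2010dirichlet}, the argument already given above the proposition --- applying \cite[Theorem~6.3.2]{fukushima2010dirichlet} with $\phi\equiv1$, which is admissible exactly because $\mu(E)<\infty$ --- shows that $\mathcal{E}$ is recurrent. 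Thus $(\mathcal{E},D(\mathcal{E}))$ is irreducible, recurrent, and has finite reference measure, so \cite[Theorem~6.3.3(iii)]{fukushima2010dirichlet} applies: there is a Borel set $N\subset E$ with $\capac(N)=0$ such that \eqref{eqn:ergodic} holds $\P_x$-a.s.\ for every $x\in E\setminus N$.

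It then remains to remove the exceptional set $N$. Since $\capac(N)=0$, choosing for each $\varepsilon>0$ an open $U\supset N$ and $u\in\mathcal{L}_U$ with $\mathcal{E}_1(u,u)<\varepsilon$ gives $\mu(N)\le\mu(U)\le\int_U u^2\,d\mu\le\mathcal{E}_1(u,u)<\varepsilon$, so $\mu(N)=0$, and Corollary~\ref{cor:abs_continuity} then yields $p_t(x,N)=0$ for every $x\in E$ and $t>0$. Let $A$ be the event that the limit in \eqref{eqn:ergodic} exists and equals $\mu(E)^{-1}\int_E fp$. Because $\tfrac1T\int_0^T f(X(t))\,dt$ and $\tfrac1T\int_0^T f(X(t+s))\,dt$ have the same limiting behaviour for any fixed $s>0$, the event $A$ is invariant under time shifts, so for fixed $s>0$ the Markov property gives
$$\P_x(A) = \E_x\big[\P_{X(s)}(A)\big] = \int_E p_s(x,dy)\,\P_y(A) = \int_{E\setminus N} p_s(x,dy) = 1$$
for every $x\in E$, using $\P_y(A)=1$ on $E\setminus N$ and $p_s(x,N)=0$. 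This is the asserted conclusion.

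The main obstacle is this last step: \cite[Theorem~6.3.3(iii)]{fukushima2010dirichlet} only delivers the ergodic limit for quasi-every starting point, so the crux is arguing that the exceptional set of bad starting points is genuinely negligible for the process started anywhere. I expect this to rest on the two facts just used --- that zero-capacity sets are $\mu$-null (a one-line estimate from the definition of $\capac$) and that Corollary~\ref{cor:abs_continuity} already forces $p_t(x,N)=0$, so no triple-collision machinery is needed here --- together with a careful but routine verification that $A$ is shift-invariant (the first $s$ units of time do not affect a Cesàro limit).
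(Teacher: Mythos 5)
Your argument is correct and takes essentially the same route as the paper: the proposition is stated there as a direct consequence of \cite[Theorem~6.3.3(iii)]{fukushima2010dirichlet}, with irreducibility supplied by $p>0$ and recurrence by conservativity plus $\mu(E)<\infty$ via \cite[Theorem~6.3.2]{fukushima2010dirichlet} with $\phi\equiv1$, exactly as you set up. The one thing you add is an explicit upgrade from quasi-every to every starting point (zero capacity $\Rightarrow$ $\mu$-null $\Rightarrow$ $p_t(x,N)=0$ by Corollary~\ref{cor:abs_continuity}, combined with shift-invariance of the Ces\`aro-limit event); the paper leaves this step implicit, and your treatment of it is sound.
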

For the remainder of this section we assume that $\mu$ is a finite measure and that Assumption~\ref{ass:non_explosion} holds. By the ergodic property \eqref{eqn:ergodic} the long-term occupation times of the sets $E_{\tau}$ are given by

\begin{equation}\label{eqn:occupation_time}
	\theta_{\tau} := \lim_{T \to \infty} \frac{1}{T}\int_0^T 1_{E_\tau}(X(t))dt = \frac{\mu(E_\tau)}{\mu(E)}
\end{equation} for every $\tau \in \mathcal{T}$. The asymptotic average occupation time that the coordinate $X_i$ spends in the $k^{\text{th}}$ rank is given by
\begin{equation}\label{eqn:rank_occupation_time} \theta_{k,i} := \lim_{T \to \infty} \frac{1}{T}\int_0^T 1_{\{r_k(X(t)) = 1\}}dt = \sum_{\tau \in \mathcal{T}, \tau(k) = i} \lim_{T \to \infty} \frac{1}{T}\int_0^T 1_{E_\tau}(X(t))dt = \sum_{\tau \in \mathcal{T}, \tau(k) = i}\theta_{\tau}.
\end{equation}
for $i,k=1,\dots,d$. Additionally, the ergodicity of $X$ implies a Birkhoff ergodic theorem  for $X_{()}$. Set $$E_{\geq} = \{y \in \R^d: y = x_{()} \text{ for some } x \in E\}$$ to denote the set of ranks of vectors in $E$. Note that $\tau (\bar E_\tau) \subseteq E_{\geq}$ and $E_{\geq} = \bigcup_{\tau} \tau(\bar E_\tau)$ where $\tau(\bar E_\tau) := \{x_\tau: x \in E_{\tau}\}$. Define $q:E_{\geq} \to (0,\infty)$ via
\begin{equation} \label{eqn:ranked_density}
	q(y) = \sum_{\tau \in \mathcal{T}} p_\tau(y_{\tau^{-1}}),
\end{equation}
where $p_\tau(y_{\tau^{-1}})$ is defined to be zero if $y_{\tau^{-1}} \notin E_{\tau}$. Intuitively, $q$ will act as an invariant density for $X_{()}$. Indeed we obtain the following corollary, which shows that $X_{()}$ satisfies Birkhoff's ergodic theorem with (unnormalized) invariant density $q$:
\begin{cor}
Let $d\nu(y) = q(y)dy$ be a measure on $E_{\geq}$.
Then for any $f \in L^1(E_{\geq};\nu)$ we have
\[\lim_{T \to \infty} \frac{1}{T} \int_0^T f(X_{()}(t))dt = \frac{1}{\nu(E_{\geq})} \int_{E_{\geq}} f(y)q(y)dy.\]
\end{cor}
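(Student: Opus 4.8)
The plan is to reduce the statement to the Birkhoff ergodic theorem for $X$, Proposition~\ref{prop:ergodic}, applied to the pullback $g := f \circ (\cdot)_{()}$, i.e.\ $g(x) := f(x_{()})$. Since the sorting map $x \mapsto x_{()}$ is continuous, $g$ is Borel measurable whenever $f$ is, and clearly $g(X(t)) = f(X_{()}(t))$ for all $t$, so the time averages on the two sides agree. It therefore remains to identify the space average $\frac{1}{\mu(E)}\int_E g\,p$ delivered by Proposition~\ref{prop:ergodic} with $\frac{1}{\nu(E_{\geq})}\int_{E_{\geq}} f\,q$. Both of these identifications follow from the single change-of-variables formula
\[
\int_E h(x_{()})\,p(x)\,dx \;=\; \int_{E_{\geq}} h(y)\,q(y)\,dy
\]
for nonnegative Borel $h$ on $E_{\geq}$; specializing to $h \equiv 1$ gives $\mu(E) = \nu(E_{\geq})$, and since $\mu$ is assumed finite and $p > 0$ on the nonempty open set $E$, we obtain $0 < \nu(E_{\geq}) = \mu(E) < \infty$, so that $\nu$ is admissible for normalization.

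To prove the change-of-variables formula I would split $E = \bigcup_{\tau \in \mathcal{T}}\bar E_\tau$, discard the common boundaries $\partial E_\tau \cap \partial E_{\tau'}$ (Lebesgue-null), and observe that on $E_\tau$ the ordering $x_{\tau(1)} > \cdots > x_{\tau(d)}$ forces $x_{()} = x_\tau$. For each fixed $\tau$ the coordinate permutation $y = x_\tau$ is a linear bijection with $|\det| = 1$ and inverse $x = y_{\tau^{-1}}$, mapping $E_\tau$ onto $\tau(\bar E_\tau) \subseteq E_{\geq}$; hence $\int_{E_\tau} h(x_{()})\,p(x)\,dx = \int_{\tau(\bar E_\tau)} h(y)\,p(y_{\tau^{-1}})\,dy$. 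By the convention adopted in \eqref{eqn:ranked_density}, $p_\tau(y_{\tau^{-1}})$ equals $p(y_{\tau^{-1}})$ exactly when $y_{\tau^{-1}} \in E_\tau$ --- which, up to the Lebesgue-null set of $y \in E_{\geq}$ with a repeated coordinate, is precisely the condition $y \in \tau(\bar E_\tau)$ --- and vanishes otherwise; so summing the displayed identity over $\tau$ and recognizing $\sum_{\tau} p_\tau(y_{\tau^{-1}}) = q(y)$ yields the formula. The integrability bookkeeping is then routine: applying the formula with $h = |f|$ shows $g \in L^1(E;\mu)$ if and only if $f \in L^1(E_{\geq};\nu)$, after which the same computation without absolute values (legitimate by integrability) gives $\int_E g\,p = \int_{E_{\geq}} f\,q$, and an application of Proposition~\ref{prop:ergodic} to $g$ completes the argument.

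The only genuinely delicate point is the bookkeeping in the change of variables. The images $\tau(\bar E_\tau)$, $\tau \in \mathcal{T}$, overlap in general --- if several permutations of a ranked vector $y$ happen to lie in $E$, then $y$ lies in several of these images --- and this multiplicity is precisely what is encoded by the sum defining $q$ in \eqref{eqn:ranked_density}; in other words $q$ is the density of the pushforward of $\mu$ under the sorting map. One must correspondingly be careful to throw away the relevant Lebesgue-null sets (the shared boundaries $\partial E_\tau \cap \partial E_{\tau'}$ inside $E$, and the locus of $y \in E_{\geq}$ with a repeated coordinate) on which the pointwise identities $x_{()} = x_\tau$ and $p_\tau(y_{\tau^{-1}}) = p(y_{\tau^{-1}})\,\I_{\{y_{\tau^{-1}} \in E_\tau\}}$ may fail; everything else is a direct consequence of the ergodic theorem already established.
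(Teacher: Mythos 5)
Your proposal is correct and follows essentially the same route as the paper's proof: pull $f$ back through the sorting map to $\tilde f(x)=f(x_{()})$, apply Proposition~\ref{prop:ergodic}, and convert $\frac{1}{\mu(E)}\int_E f(x_{()})p(x)\,dx$ into $\frac{1}{\nu(E_{\geq})}\int_{E_{\geq}}f(y)q(y)\,dy$ by a coordinate-permutation change of variables together with $\mu(E)=\nu(E_{\geq})$. The paper states the change of variables in one line; your version merely makes explicit the per-$\tau$ decomposition, the null sets being discarded, and the way the sum defining $q$ accounts for the overlapping images $\tau(\bar E_\tau)$.
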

\begin{proof}
	Fix $f \in L^1(E_{\geq};\nu)$. Note
 that $\tilde f \in L^1(E;\mu)$ where $\tilde f(x):= f(x_{()})$. Hence we have by Proposition~\ref{prop:ergodic} that  

$$\lim_{T \to \infty} \frac{1}{T} \int_0^T f(X_{()}(t))dt = \frac{1}{\mu(E)} \int_{E} f(x_{()})p(x)dx = \frac{1}{\nu(E_{\geq})} \int_{E_{\geq}} f(y)q(y)dy,$$
where in the last equality we used a change of variables and the fact that $\mu(E) = \nu(E_{\geq})$. 
\end{proof}
We end this section with a few examples. 
\begin{eg}[Pure Rank-Based Models]
	Suppose the domain $E$ is \emph{rank-symmetric}; that is $\tau(E_\tau) = E_{\geq}$ for every  $\tau \in \mathcal{T}$.  Take as inputs  $\kappa \in C^1(E_{\geq};\mathbb{S}^d_{++}) \cap C^0(\bar E_{\geq};\mathbb{S}^d_{++})$ and $q \in C^1(E_{\geq};(0,\infty)) \cap C^0(\bar E_{\geq};(0,\infty))$, where as before the closure $\bar E_{\geq}$ is understood to be with respect to the subspace topology on $E$. The inputs $\kappa$ and $q$ will serve as the covariation matrix and invariant density for $X_{()}$, the ranked process of $X$. To construct $X$ we first extend $\kappa$ and $q$ to all of $E$ by symmetrization. That is, we define $c: E \to \mathbb{S}^d_{++}$ and $p: E \to (0,\infty)$ via 
	\begin{align*}
		c_{ij}(x) & = \sum_{k,l=1}^d 1_{\{r_k(x) = i\}}1_{\{r_l(x) = j\}}\kappa_{kl}(x_{()}); \quad i,j=1,\dots,d,\\
		p(x) & = \frac{1}{d!}q(x_{()}).
	\end{align*}
It is clear that $(c,p)$ satisfy Assumption~\ref{ass:inputs} so that the results of the previous section apply. In particular the rank process of $X$ is given by
\begin{equation} \label{eqn:pure_rank_dynamics}
	dX_{(k)}(t) = \beta_k(X_{()}(t))dt + \kappa_k^{1/2}(X_{()}(t))dW(t) +  \frac{1}{4}dL_{X_{(k)} - X_{(k+1)}} - \frac{1}{4}dL_{{X_{(k-1)}} - X_{(k)}}; \quad t < \zeta
\end{equation} for $k =1,\dots,d$ where
$\beta:E_{\geq} \to \R^d$ is given by $\beta(y) = \frac{1}{2}\diver \kappa(y) + \frac{1}{2}\kappa(y)\nabla \log q(y)$. Since the dynamics \eqref{eqn:pure_rank_dynamics} depend only on the rank vector process $X_{()}$ we call such a model a \emph{pure rank-based model}. 

Now we enforce Assumption~\ref{ass:non_explosion} and additionally assume that $\mu(E) < \infty$. As a consequence of Proposition~\ref{prop:ergodic} we obtain that $X$ is ergodic with (unnormalized) density $p$ and $X_{()}$ is ergodic with (unnormalized) density $q$. Moreover, in these pure rank-based models each coordinate $X_i$ asymptotically spends the same amount of time occupying each rank. Indeed, from \eqref{eqn:occupation_time}, we see that the asymptotic occupation time  of the set $E_{\tau}$ by $X$ is given by $\theta_{\tau} =  \mu(E_\tau)/\mu(E) = 1/d!$ in this setting. Since for any fixed $i,k \in \{1,\dots,d\}$ there are $(d-1)!$ permutations $\tau$ for which $\tau(k) = i$ it follows from \eqref{eqn:rank_occupation_time} that $\theta_{k,i} = 1/d$. 
\end{eg}

\begin{eg}[Competing Brownian Particle Systems; Common Volatility] \label{eg:Brownian_particle}
Let $E = \R^d$ and suppose $\sigma > 0$ and $g \in \R^d$ are given. Define $c \equiv \sigma^2 I_{d \times d}$ and $p(x) = \exp(\frac{2g}{\sigma^2} ^\top x_{()})$. Then Assumption~\ref{ass:inputs} is satisfied so we see that 
$$dX_i(t) = \sum_{k=1}^d g_k1_{\{r_k(X(t) = i)\}}dt + \sigma dW_i(t); \quad i=1,\dots,d.$$
This recovers the class of competing Brownian particle systems with common volatility discussed in the introduction. The triple collision and ergodic properties of the more general process 
\[Y_i(t) = \sum_{k=1}^d g_k1_{\{r_k(Y(t)) = i\}}dt + \sum_{k=1}^d \sigma_k1_{\{r_k(Y(t)) = i\}}dW_i(t); \quad i=1,\dots,d\]
for some constants $\sigma_k > 0$ has received a lot of recent attention in the literature \cite{MR3055258,MR2473654,MR3325099,ichiba2011hybrid}. Assumption~\ref{ass:inputs} is not satisfied for the volatility matrix $c(x) = \sum_\tau \sigma^2_\tau 1_{E_\tau}(x) I_{d \times d} $ due to its discontinuity at the common boundary $\partial E_\tau \cap \partial E_{\tau'}$ for distinct permutations $\tau,\tau'$. As such, we are unable to handle this specification outside the common volatility case. A further discussion of the limitations of our approach is carried out in Section~\ref{sec:conclusion}.
\end{eg}

\section{Stochastic Portfolio Theory} \label{sec:SPT}

 In this section we show how the theory developed in the previous sections relates to, and extends, the equity models previously considered in the literature.

  Suppose we are given the \emph{market capitalizations processes} of a collection of stocks; that is, $S = (S_1,\dots,S_d)$ represent the capitalizations of $d$ stocks. Then the process $X = (X_1,\dots,X_d)$ given by $X_i = S_i/(S_1+\dots+S_d)$ is called the \emph{market weight process}. If none of the stock capitalizations vanish then the process $X$ takes values in the open simplex,
\begin{equation} \label{eqn:simplex}
\Delta^{d-1}_+ := \left\{x \in (0,1)^d: \sum_{i=1}^d x_i = 1\right\}.
\end{equation}

As defined in \eqref{eqn:simplex}, $\Delta^{d-1}_+$  is a $d-1$ dimensional subset of $\R^d$. However, it can be identified with an open set $E \subset \R^{d-1}$ via the transformation \[(x^1,\dots,x^{d-1}) \mapsto (x^1,\dots,x^{d-1},1-\sum_{i=1}^{d-1}x^i).\] 
For ease of notation and consistency with the SPT literature we work with the set $\Delta^{d-1}_+$ rather than directly working with the set $E$. Since $\Delta^{d-1}_+$ can be viewed as a differentiable manifold where the vector fields $\partial_i - \partial_d$, $i=1,\dots,d-1$ span the tangent space at each point of the simplex, the formulas in the sequel involving derivatives of functions on $\Delta^{d-1}_+$ are unambiguous. Additionally in this section we will, by an abuse of notation, write $\mathbb{S}^d_{++}$ for the set off all symmetric matrices that satisfy the positive-definite property on the tangent space of $\Delta^{d-1}_+$; that is a symmetric $d \times d$ dimensional matrix $A$ is in $\mathbb{S}^d_{++}$ if it is positive semi-definite and $\text{Ker}(A) = \text{span}(\boldsymbol{1})$. See \cite{fernholz2002stochastic,itkin2020robust} for a more detailed discussion of these conventions.

As mentioned above the rank-process $X_{()}$ of a market weight process has empirically observed stability properties. The procedure of the previous sections can be employed to construct such processes. Let $c:\Delta^{d-1}_+ \to \mathbb{S}^d_{++}$ and $p: \Delta^{d-1}_+ \to (0,\infty)$ satisfying Assumption~\ref{ass:inputs} be given. Then we obtain a market weight process $X$ corresponding to the (pre-)Dirichlet form given by \eqref{eqn:Dirichlet_form}, with dynamics \eqref{eqn:X_semimart_decomp}. Additionally, there are no triple collisions and the dynamics of $X_{()}$ are given by \eqref{eqn:rank_based_dynamics}.  If $\int_{\Delta^{d-1}_+}p = 1$ and Assumption~\ref{ass:non_explosion} holds then we obtain that $X$ is ergodic with density $p$ and $X_{()}$ is ergodic with density $q$.

The approach here is to directly construct and work with the market weight process, rather than first constructing the stock capitalization process $S$. We made this choice as the results regarding ergodicity from Section~\ref{sec:ergodicty} are directly applicable to $X$, but not to $S$; indeed the stock capitalization process will typically not be ergodic. Once $X$ is constructed, however, one can always fashion an equity market that induces $X$ as its market weight process. Indeed, given a process $X$ on the simplex and any strictly positive process $\Sigma$ we can define a stock capitalization process $S$, by setting $S_i = X_i\Sigma$ for every $i=1,\dots,d$, which induces the market weight process $X$.

  Before considering examples we develop a sufficient condition on the inputs $(c,p)$ so that Assumption~\ref{ass:non_explosion} holds: 

\begin{prop} \label{prop:suff_cond}
	Suppose that \begin{equation} \label{eqn:suff_cond}
		\int_{\Delta^{d-1}_{+,\tau}}\frac{c_{\tau(d)\tau(d)}(x)}{x_{\tau(d)}^2}p(x)dx < \infty, \quad \tau \in \mathcal{T},
	\end{equation}
	 where $\Delta^{d-1}_{+,\tau} = \{x \in \Delta^{d-1}_+: x_{\tau(1)} > x_{\tau(2)} > \dots > x_{\tau(d)}\}$. Then Assumption~\ref{ass:non_explosion} holds.
\end{prop}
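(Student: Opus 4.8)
The plan is to invoke the test-function criterion for conservativeness of a Dirichlet form, namely \cite[Theorem~1.6.6]{fukushima2010dirichlet}, which states that $\mathcal{E}$ is conservative provided one can find a sequence of functions $u_n \in D(\mathcal{E})$ with $0 \le u_n \le 1$, $u_n \uparrow 1$ $\mu$-a.e., and $\mathcal{E}(u_n, v) \to 0$ for every $v$ in a suitable dense set (or, in the version most convenient here, a single function $u$ playing the role of a Lyapunov function whose Dirichlet energy against an exhausting sequence is controlled). Since Assumption~\ref{ass:non_explosion} is equivalent to conservativeness by \cite[Exercise~4.5.1]{fukushima2010dirichlet}, establishing conservativeness suffices. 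The obstruction to explosion on the simplex is the approach of the process to the boundary $\partial \Delta^{d-1}_+$, which is reached exactly when the smallest coordinate $x_{\tau(d)}$ hits $0$ on the region $\Delta^{d-1}_{+,\tau}$; hence the natural test function should behave like $\log x_{\tau(d)}$ (or a smooth, symmetrized version thereof) near that face, mimicking the classical fact that $|\log|$ of a Bessel-type quantity has finite Dirichlet energy precisely under an integrability condition of the form \eqref{eqn:suff_cond}.

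Concretely, I would proceed as follows. First, fix a smooth decreasing function $\psi: (0,\infty) \to \R$ with $\psi(r) = -\log r$ for small $r$ and $\psi$ constant for large $r$, and consider the candidate $h(x) = \min_{i} \psi(x_i)$, or more precisely a smoothed rank-decomposed version $h_\tau(x) = \psi(x_{\tau(d)})$ on each $\bar\Delta^{d-1}_{+,\tau}$; by Assumption~\ref{ass:inputs} and the continuity of $c,p$ across the common boundaries, such an $h$ lies in $C^1_{\mathcal{T}}(\Delta^{d-1}_+)$ after suitable mollification and can be truncated at level $n$ to produce a legitimate sequence. Second, I would compute the Dirichlet energy of the truncated functions: on $\Delta^{d-1}_{+,\tau}$ the gradient of $h_\tau$ is $\psi'(x_{\tau(d)})$ times the unit vector in the $x_{\tau(d)}$ direction (projected to the tangent space), so $\nabla h_\tau^\top c \nabla h_\tau p$ is, up to bounded factors coming from $\psi'(r) \asymp 1/r$ near the boundary, comparable to $c_{\tau(d)\tau(d)}(x) x_{\tau(d)}^{-2} p(x)$, and integrating over $\Delta^{d-1}_{+,\tau}$ gives exactly the finite quantity \eqref{eqn:suff_cond}. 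Third, this uniform bound on $\mathcal{E}(h_n, h_n)$, together with $h_n \uparrow \infty$ at the boundary and $h_n$ bounded on compacts, is precisely the input needed for the conservativeness criterion (one passes to $u_n = 1 - (h_n/n \wedge 1)$ or similar); I would cite \cite[Theorem~1.6.6]{fukushima2010dirichlet} to conclude $\mathcal{E}$ is conservative, hence Assumption~\ref{ass:non_explosion} holds.

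The main obstacle I anticipate is two-fold and both parts are technical rather than conceptual. First, making the test function genuinely admissible: $-\log x_{\tau(d)}$ is not smooth, is unbounded, and the naive pasting across the faces $\partial\Delta^{d-1}_{+,\tau} \cap \partial\Delta^{d-1}_{+,\tau'}$ must be checked to land in $D(\mathcal{E})$ — this requires a careful mollification argument and verification that the function and its truncations belong to $C^1_{\mathcal{T},c}$ (or its closure) so that the integration-by-parts machinery of Theorem~\ref{thm:IBP} and the energy computation are valid; one must also ensure the contributions near where $x_{\tau(d)}$ is bounded away from $0$ (and near the ``corners'' where two coordinates are simultaneously small) are harmless, which is where the compact-exhaustion truncation and the local Lipschitz/boundedness of $c,p$ on compacts are used. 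Second, matching the precise form of the hypothesis in \cite[Theorem~1.6.6]{fukushima2010dirichlet} — the theorem is stated for an exhausting sequence with $\mathcal{E}(u_n,u_n) \to 0$ and requires $\mu$ to be such that the $u_n$ are in $L^2$, so some care is needed to normalize so that $u_n \in D(\mathcal{E})$ with the right convergence; I expect this to amount to choosing $u_n = \phi(h/n)$ for a fixed cutoff profile $\phi$ and noting $\mathcal{E}(u_n,u_n) \le n^{-2}\sup|\phi'|^2 \mathcal{E}(h_{\text{trunc}},h_{\text{trunc}}) \to 0$ using the uniform energy bound from step two.
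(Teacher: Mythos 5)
Your proposal is correct and follows essentially the same route as the paper: a cutoff test function depending only on the smallest coordinate $x_{(d)}$, whose gradient is of order $1/x_{(d)}$ on the transition region, so that its Dirichlet energy is controlled by the integral in \eqref{eqn:suff_cond} and tends to zero. The paper uses the simpler piecewise-linear cutoff $u_n = n(x_{(d)}-1/n)\land 1\lor 0$ (Lipschitz and compactly supported, so no mollification is needed) and invokes the recurrence criterion \cite[Theorem~1.6.2]{fukushima2010dirichlet} rather than the conservativeness test, but these are cosmetic differences.
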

\begin{proof}
	We show that this condition directly implies $\mathcal{E}$ is recurrent, which in turn implies Assumption~\ref{ass:non_explosion}.
By \cite[Theorem~1.6.2]{fukushima2010dirichlet} this is equivalent to finding a sequence $\{u_n\}_{n \in \N} \subset D(\mathcal{E})$ such that $\lim_{n \to \infty} u_n = 1$ a.e and $\lim_{n \to \infty} \mathcal{E}(u_n,u_n)$ $= 0$. Set $$u_n(x) := n(x_{(d)} -1/n) \land 1 \lor 0$$ for $x \in \Delta^{d-1}_+$. It is clear that $u_n$ is $n$-Lipschitz and hence differentiable almost everywhere. Moreover, $u_n(x) = 0$ for $x \in \{x_{(d)} \leq 1/n\}$ so it is clear that $u_n \in D(\mathcal{E})$, as it is compactly supported and Lipschitz. We also see that $u_n(x)= 1$  for $x \in  \{x_{(d)} \geq 2/n\}$ from which it follows that $u_n(x) \to 1$ and $\nabla u_n(x) \to 0$ for a.e.~$x$.

Now fix $\tau \in \mathcal{T}$ and $x \in \Delta^{d-1}_{+,\tau}$. Since $u_n$ only depends on $x_{(d)}$ we see that $\partial_i u_n(x) = 0$ for all $i \ne \tau(d)$. Moreover, by the previous observation if $x_{\tau(d)} <1/n$ or $x_{\tau(d)} > 2/n$ then $\partial_{\tau(d)} u_n(x) = 0$ as well. For $x_{\tau(d)} \in (1/n,2/n)$ we estimate that
$\partial_{\tau(d)} u_n(x)  = n \leq \frac{2}{x_{\tau(d)}}$. Thus we obtain that
$$\mathcal{E}(u_n,u_n) =\frac{1}{2}  \int_{\Delta^{d-1}_+} \nabla u_n^\top c \nabla u_n p = \frac{1}{2}\sum_{\tau \in \mathcal{T}} \int_{\Delta^{d-1}_{+,\tau}} \partial_{\tau(d)}u_n(x)^2 c_{\tau(d)\tau(d)}(x)p(x)dx.$$
We have the estimate
$$\partial_{\tau(d)}u_n(x)^2 c_{\tau(d)\tau(d)}(x)p(x) \leq \frac{4c_{\tau(d)\tau(d)}(x)}{x_{\tau(d)}^2}p(x)$$
for every $\tau \in \mathcal{T}$,
which is integrable by assumption. Using the dominated convergence theorem we conclude that $\lim_{n \to \infty} \mathcal{E}(u_n,u_n) = 0$, completing the proof.
\end{proof}

\subsection{Atlas and Volatility-Stabilized Markets, Name-Based Drift} \label{eg:name_based_drift}
	
Let $\beta \geq 0$ and $\alpha,\sigma \in (0,\infty)^d$ be given.
For $x \in \Delta^{d-1}_+$ define
\begin{align*}p(x) & = 
\begin{dcases}
	Z^{-1}\(\sum_{i=1}^d\frac{x_i^{2\beta}}{\sigma_i^2}\)^{\frac{2(1+(d-1)\beta)-d}{2\beta}}\prod_{i=1}^d x_i^{\frac{2\alpha_i}{\sigma_i^2}+2\beta-1} & \beta \ne 0, \\
	Z^{-1}\prod_{i=1}^d x_i^{\frac{2\alpha_i}{\sigma_i^2}-1} & \beta = 0,
\end{dcases}\\
c_{ij}(x) & = -x_ix_j\( \sigma_i^2x_i^{1-2\beta}+ \sigma_j^2x_j^{1-2\beta}- \sum_{k=1}^d \sigma_k^2 x_k^{2(1-\beta)}\), && i,j=1,\dots,d, \ i \ne j
\end{align*}
and $c_{ii}(x) = \sum_{j \ne i} c_{ij}(x)$ for $i =1,\dots,d$, where $Z$ is a normalizing constant chosen so that $\int_{\Delta^{d-1}_+} p = 1$. It is easy to see that Assumption~\ref{ass:inputs} is satisfied for this choice of $(c,p)$. Moreover a direct calculation shows that \eqref{eqn:suff_cond} holds, so by Proposition~\ref{prop:suff_cond} it follows that Assumption~\ref{ass:non_explosion} holds for  the process $X$ corresponding to the (pre-)Dirichlet form defined by \eqref{eqn:Dirichlet_form}. The dynamics of $X$ in this case are given by
 \begin{equation} \label{eqn:vol_stab_market_dynam}
	\begin{split}
		\frac{dX_i}{X_i} = \((\alpha_i+\frac{\sigma_i^2}{2})X_i^{-2\beta} - \sigma_i^2 X_i^{1-2\beta} - \sum_{j=1}^d (\alpha_j+\frac{\sigma_j^2}{2})X_j^{1-2\beta} + \sum_{j=1}^d \sigma_j^2 X_j^{2(1-\beta)}\)dt \\
		+   \sigma_iX_i^{-\beta}(1-X_i)dW_i + \sum_{j \ne i} \sigma_jX_j^{1-\beta}dW_j
	\end{split}
\end{equation}
where we omitted the time argument for notational clarity. Since no triple collisions occur in this model, by Theorem~\ref{thm:triple_collision}, the dynamics of the ranked-market weights are given by \eqref{eqn:rank_based_dynamics}. Moreover, $X$ is ergodic with density $p$ and $X_{()}$ is ergodic with density $q$. 

This model recovers and extends models for the market weights previously studied in the SPT literature. The case $\beta > 0$ and $\sigma_i^2 = \sigma^2$ for every $i=1,\dots,d$ and some $\sigma^2 > 0$ recovers the \emph{generalized volatility-stabilized markets} introduced in \cite{pickova2014generalized}. The further specification $\beta = 1/2, \sigma^2=1$ and $\alpha_i = \alpha/2$ for some $\alpha > 0$ recovers the original \emph{volatility-stabilized market} introduced in \cite{fernholz2005relative}. When $\beta = 0$, we recover a subset of the models studied in \cite{ichiba2011hybrid}; namely those models with only name-based (as opposed to rank-based) variances. 

We can also handle the case when $\beta < 0$. In this situation $p$ may not be integrable. As such, for $\beta < 0$, we additionally impose the condition 
\[\frac{\alpha_i}{\sigma_i^2} \geq (1-2\beta)d-2; \quad i=1,\dots,d.\]
This condition guarantees that $p$ is integrable and that Assumption~\ref{ass:non_explosion} holds by virtue of Proposition~\ref{prop:suff_cond}. As such, the results of the previous sections apply yielding ergodicity and absence of triple collisions. This specification, however, does not seem to have a clear financial motivation. The case $\beta > 0$ yields the qualitative property ``larger stocks have larger variances", which is observed in equity markets, while the choice $\beta < 0$ would yield the reverse relationship. We expand on this qualitative property of generalized volatility stabilized markets in Remark~\ref{rem:SPT}.

\subsection{Atlas and Volatility-Stabilized Markets, Hybrid Drift} \label{eg:hybrid_drift}
	
	Let $\beta \geq 0$, $\sigma^2 > 0$ and $\gamma,g \in \R^d$ be given. For every $\tau \in \mathcal{T}$ and $x \in \bar \Delta^{d-1}_{+,\tau}$ define
	\begin{equation}\label{eqn:p_atlas}
		p_{\tau}(x)  = 
	\|x\|_{2\beta}^{2(1+(d-1)\beta)-d}\prod_{i=1}^d x_i^{\frac{2(\gamma_i + g_{\tau(i)})}{\sigma^2}+2\beta-1}.
	\end{equation}
We see that $p_\tau = p_{\tau'}$ for $x \in \partial \Delta^{d-1}_{+,\tau} \cap \partial \Delta^{d-1}_{+,\tau'}$ so we can define $p:\Delta^{d-1}_+ \to (0,\infty)$ by setting $p(x) = \sum_{\tau \in \mathcal{T}} p_\tau(x)1_{\Delta^{d-1}_{+,\tau}}(x)$ for $x \in \bigcup_{\tau} \Delta^{d-1}_{+,\tau}$ and continuously extending it to $\Delta^{d-1}_+$. Next, for $x \in \Delta^{d-1}_+$ define
\begin{align}
	c_{ij}(x) & = -\sigma^2x_ix_j\( x_i^{1-2\beta}+ x_j^{1-2\beta}- \sum_{k=1}^d  x_k^{2(1-\beta)}\), && i,j=1,\dots,d, \ i \ne j \label{eqn:c_atlas}
\end{align}
and $c_{ii}(x) = \sum_{j \ne i} c_{ij}(x)$ for $i =1,\dots,d$. Here for $r \ne 0$,  $\|x\|_{r} = (\sum_{i=1}^d x_i^{r})^{1/r}$ and, otherwise, we use the convention $\|x\|_0 = 1$ for every $x \in \Delta^{d-1}_+$. It is clear that Assumption~\ref{ass:inputs} is satisfied for this choice of $(c,p)$. Thus, the dynamics of the process $X$ corresponding to the (pre-)Dirichlet form defined by \eqref{eqn:Dirichlet_form} are given by \eqref{eqn:X_semimart_decomp}. For every $x \in \Delta^{d-1}_{+}$ and $i =1,\dots,d$ let $n_i(x) = k$, where $k$ is such that $r_k(x) = i$. With this notation in place we see that \eqref{eqn:X_semimart_decomp} becomes
 \begin{equation} \label{eqn:Atlas_market_dynam}
	\begin{split}
		\frac{dX_i}{X_i} = \((\gamma_i + g_{n_i(X)}+\frac{\sigma^2}{2})X_i^{-2\beta} - \sigma^2 X_i^{1-2\beta} - \sum_{j=1}^d (\gamma_j + g_{n_j(X)}+\frac{\sigma^2}{2})X_j^{1-2\beta} + \sigma^2\|X\|_{2(1-\beta)}^{2(1-\beta)}\)dt \\
		+   \sigma X_i^{-\beta}(1-X_i)dW_i + \sigma\sum_{j \ne i} X_j^{1-\beta}dW_j
	\end{split}
\end{equation}
for every $i=1,\dots,d$,
where we omitted the time argument for notational clarity. Additionally, by Theorem~\ref{thm:triple_collision} there are no triple collisions and the dynamics of the ranked market weights are given by \eqref{eqn:rank_based_dynamics}. It can be readily checked that under the stability condition
\begin{align}\label{eqn:stability_cond}
2\beta \ell +2\min\{1-\beta,0\} +	\frac{1}{\sigma^2}\sum_{k=1}^l (g_{d+1-k} + \gamma_{\tau(d+1-k)}) > 0; \qquad  l = 1,\dots,d-1, \  \tau \in \mathcal{T},
\end{align}
the condition \eqref{eqn:suff_cond} holds so that Assumption~\ref{ass:non_explosion} holds by virtue of Proposition~\ref{prop:suff_cond}. Moreover, under condition \eqref{eqn:stability_cond} $p$ is integrable so that $X$ and $X_{()}$ are ergodic by Proposition~\ref{prop:ergodic}. The invariant (unnormalized) density for the ranked market weights in this case is given by $q:\Delta^{d-1}_{+,\geq} \to (0,\infty)$ via

\begin{equation}\label{eqn:Atlas_density} q(y) = \|y\|_{2\beta}^{2(1+(d-1)\beta)-d} \sum_{\tau \in \mathcal{T}} \prod_{k=1}^d y_k^{\frac{2( \gamma_{\tau(k)}+ g_k)}{\sigma^2} + 2\beta -1}.
\end{equation}
The case $\gamma = 0$ yields a pure rank-based model, while the case $g = 0$ gives a fully name-based model and coincides with Example~\ref{eg:name_based_drift} where $\sigma_i^2 = \sigma^2$ for every $i$. 

The specification $\beta = 0$ recovers a sub-class of the so-called \emph{hybrid Atlas models} introduced in \cite{ichiba2011hybrid}; namely those models which have the same volatility parameter $\sigma^2$ for each rank. The further specification $g_k = -g$ for $k=1,\dots,d-1$ and $g_d = (d-1)g$ for some constant $g > 0$ recovers the original \emph{Atlas model} first introduced in \cite{banner2005atlas}. In the case $\beta = 0$ we can assume without loss of generality that $\sum_{i=1}^d \gamma_i + g_i = 0$, due to the fact that $\sum_{i=1}^d dX_i = 0$. Indeed, the input pair $(\gamma,g)$ and $(\gamma - \frac{1}{d}\sum_{i=1}^d \gamma_i \boldsymbol{1},g - \frac{1}{d}\sum_{i=1}^d g_i\boldsymbol{1})$ yield the same market weight dynamics \eqref{eqn:Atlas_market_dynam}. Under this assumption, condition \eqref{eqn:stability_cond} reduces to the stability condition \cite[Equation~(3.3)]{ichiba2011hybrid}, under which the authors of that paper obtain ergodicity of the ranked market weights. In this case the formula \eqref{eqn:Atlas_density} reduces to \cite[Equation~5.18]{ichiba2011hybrid} up to a normalizing constant.

 The case $\beta > 0$, to the best of our knowledge, has not previously been studied in the SPT literature. We view this specification as an extension of the generalized volatility-stabilized markets \cite{pickova2014generalized}, which allows for rank-based drift dependencies, in addition to name-based dependencies, for the market weights.
\begin{remark} \label{rem:SPT}
	Both the generalized volatility stabilized models of \cite{pickova2014generalized} and hybrid Atlas models of \cite{ichiba2011hybrid} first define the stock price $S$ and then obtain the market weights via the transformation $X_i=S_i/(S_1+\dots+S_d)$ as described above. The stock price processes introduced there, which generate the market weights of Examples~\ref{eg:name_based_drift} and \ref{eg:hybrid_drift}, are given by
	\begin{equation} \label{eqn:Atlas_stock}d\log S_i(t) = \frac{\varphi_i(X(t))}{X_i(t)^{2\beta}}dt + \frac{\sigma_i}{X_i(t)^\beta}dW_i(t), \quad i=1,\dots,d
	\end{equation}
	for specific choices of piecewise constant $\varphi_i$. 
	 Indeed, in Example~\ref{eg:name_based_drift}, $\varphi_i \equiv \alpha_i$, while in Example~\ref{eg:hybrid_drift} $\varphi_i(x) = \gamma_i + \sum_{k=1}^d g_k1_{\{r_k(x) = i\}}$ and $\sigma_i = \sigma$ for some $\sigma > 0$ and every $i=1,\dots,d$. When $\beta = 0$ and $\gamma = 0$ the latter reduces to the Brownian particle system of Example~\ref{eg:Brownian_particle}. The case $\beta > 0$ has the feature that the low ranked assets have higher volatilities, which captures a qualitative effect observed in equity markets. We chose to work directly with the market weights in Examples~\ref{eg:name_based_drift} and \ref{eg:hybrid_drift} rather than the stock capitalizations as we were able to more directly apply the ergodic results of Section~\ref{sec:ergodicty}. 
\end{remark}

\subsection{A Tractable Class of Models}
Suppose $p \in C^1_{\mathcal{T}}(\Delta^{d-1}_+;(0,\infty))$ is given with $\int_{\Delta^{d-1}_+} p = 1$. Take input functions $f_i, f_{ij}$ and $g$ which satisfy:

\begin{enumerate}[label = ({\roman*})] \itemsep0.1em
	\item For each $i=1,\dots,d$, $f_i \in C^1 ((0,1); (0,\infty))$ is bounded and satisfies $\lim_{y \downarrow 0} f_i(y) = 0$,
	\item $g \in C^1_{\mathcal{T}}(\Delta^{d-1}_{+};(0,\infty))$,
	\item For each $i,j=1,\dots,d$ with $i \ne j$ and $x \in \cup_{\tau} \Delta^{d-1}_{+,\tau}$ set $f_{ij}(x) = \sum_{\tau \in \mathcal{T}} f^{ij}_{\tau}(x_{-ij})1_{\Delta^{d-1}_{+,\tau}}(x)$ where $x_{-ij}$ is the $d-2$ dimensional vector obtained from $x$ by removing the $i^{\text{th}}$ and $j^{\text{th}}$ component. Here for each $\tau \in \mathcal{T}$,  $f^{ij}_{\tau}$ is bounded, satisfies $f^{ij}_{\tau} \in C^1(\tilde E^{d-2};(0,\infty))$ where $\tilde E^{d-2} = \{x \in (0,1)^{d-2}: \sum_{k=1}^{d-2}x_k < 1\}$ and is chosen such that $f_{ij}$ can be continuously extended to $\Delta^{d-1}_{+}$.	
\end{enumerate}
We then define for $x \in \Delta^{d-1}_+$, 
\begin{equation} \label{eqn:c_tractable}
c_{ij}(x) = -f_{ij}(x)f_i(x_i)f_j(x_j)g(x); \quad i \ne j
\end{equation} and $c_{ii}(x) = \sum_{j\ne i}^d c_{ij}(x)$ for $i=1,\dots,d$. The conditions (i)-(iii) on the input functions ensure that $c \in C^1_{\mathcal{T}}(\Delta^{d-1}_{+};\mathbb{S}^d_{++})$. Hence Assumption~\ref{ass:inputs} holds for this pair $(c,p)$.

This model is a natural extension of the tractable class of models introduced in \cite[Section~6]{itkin2020robust}.
The product structure of the covariation matrix $c$ in \eqref{eqn:c_tractable} is such that the \emph{supermartingale num\'eraire portfolio} in the equity market is \emph{functionally generated} in the sense of \cite[Chapter~3]{fernholz2002stochastic}. As such, this structure allows one to obtain explicit formulas for the growth-optimal portfolio in arbitrary dimension,  which is an attractive property for this class of models. Additionally, the specification $f_i(x) = x$, $g(x) =  1$ and $f_{ij}(x) = \alpha_{ij}$ for some constants $\alpha_{ij} \geq 0$ and every $x \in \Delta^{d-1}_+$ in \eqref{eqn:c_tractable} together with the choice of Dirichlet distribution for $p$,
\[p(x) \propto \prod_{i=1}^d x_i^{a_i-1}\] for some constants $a_i > 0$ results in $X$ being a \emph{polynomial process on the simplex} as studied in \cite{cuchiero2019polynomial}. Such processes have a very special structure which, among other things, allow for analytic computation of moments. We refer the reader to \cite{filipovic2016polynomial,cuchiero2019polynomial} for a detailed discussion of polynomial processes as well as their applications in SPT and other areas of mathematical finance. 

The results of this paper now show that the market weight process is strong Feller, no triple collisions occur in this model and, if  the non-explosion criterion Assumption~\ref{ass:non_explosion} is satisfied (which for example follows if \eqref{eqn:suff_cond} holds), then the market weight and ranked market weight processes are ergodic.

\section{Conclusion} \label{sec:conclusion}
Using the theory of Dirichlet forms we have constructed a well-behaved class of continuous semimartingales which evolve according to their relative ranks. Using the special structure of this construction we were able to obtain the strong Feller property, prove the nonexistence of triple collisions and various ergodic properties. As explained in the introduction and in Section~\ref{sec:SPT}, this class extends the analysis of rank-based diffusions beyond the case of competing Brownian particle systems previously studied in the literature. 

There are, however, clear limitations of this approach and we are unable to recover certain important specifications of competing Brownian particle systems. Indeed, consider again the diffusion mentioned in Example~\ref{eg:Brownian_particle},

\begin{equation} \label{eqn:Brownian_particle_rank_vol}
	dY_i(t) = \sum_{k=1}^d g_k1_{\{r_k(X(t)) = i\}}dt + \sum_{k=1}^d \sigma_k1_{\{r_k(X(t)) = i\}}dW_i(t)
\end{equation} where
$g \in \R^d$ and $\sigma \in (0,\infty)^d$. As mentioned in the discussion of Example~\ref{eg:Brownian_particle} the covariation matrix corresponding to this diffusion is $c(x) = \sum_\tau \sigma^2_\tau 1_{E_\tau}(x) I_{d \times d}$, which is discontinuous and so does not satisfy Assumption~\ref{ass:inputs}. Nevertheless if we define 
\[p(x) \propto \exp\(2\sum_{k=1}^d \frac{x_{(k)}g_k}{\sigma_k^2}\)\] then we have that $\frac{1}{2}\diver c_i +\frac{1}{2}c_i\nabla \log p = \sum_{k=1}^d g_k1_{\{r_k(\cdot) = i\}}$ so one might hope the Dirichlet form \eqref{eqn:Dirichlet_form} corresponding to this choice of $(c,p)$ and the domain $E = \R^d$ will still generate the process $Y$ despite the fact that $c$ is discontinuous. Alas, this is not the case. For test functions $u,v \in \mathcal{D}$ we have from \eqref{eqn:IBP_boundary} that 
\begin{align*}  
\mathcal{E}(u,v) & = \int_{\R^d} -vLup \  +  \sum_{(\tau,\tau') \in \partial\mathcal{T}^2} \int_{\partial E_\tau \cap \partial E_{\tau'}} v \nu_{\tau}^\top (c_\tau p_\tau - c_{\tau'}p_{\tau'})\nabla u \ d\mathcal{H}^{d-1}\\
& = \int_{\R^d} -vLup \  - \frac{d(d-1)}{2\sqrt{2}}\sum_{k=1}^{d-1}(\sigma_k^2 - \sigma_{k+1}^2) \int_{\{x_{(k)} = x_{(k+1)}\}} v(\partial_{r_k}u - \partial_{r_{k+1}}u)p \ d\mathcal{H}^{d-1}
\end{align*} 
where $L$, given by \eqref{eqn:generator}, is the generator of \eqref{eqn:Brownian_particle_rank_vol}. We see that the boundary terms in the integration by parts formula do not vanish in this case so it follows that the process corresponding to this Dirichlet form will have local time terms in its semimartingale decomposition, and so, cannot correspond to the process \eqref{eqn:Brownian_particle_rank_vol}.

Moreover, \cite[Theorem~2]{ichiba2011hybrid} establishes under the condition 
\begin{equation} \label{eqn:lin_vol}
\sigma_{2}^2 - \sigma_1^2 = \sigma_3^2 - \sigma_2^2 = \dots = \sigma_d^2 - \sigma_{d-1}^2
\end{equation}
an explicit formula for the marginal invariant density of the process $(Y_{(1)} - Y_{(2)},\dots,Y_{(d-1)} - Y_{(d)})$. As a consequence of that theorem it follows that under the condition \eqref{eqn:lin_vol} the process $Y$, given by \eqref{eqn:Brownian_particle_rank_vol}, has invariant density  
\[\rho(x) \propto \exp\(4\sum_{k=1}^{d} x_{(k)}(\lambda_k - \lambda_{k-1})\)\]
where \[\lambda_k = \frac{\sum_{l=1}^{k}g_l}{\sigma_k^2 + \sigma_{k+1}^2 }; \quad k=1,\dots,d-1\] and $\lambda_0 = \lambda_{d} = 0$. Aside from the case where all of the $\sigma_k$'s are the same, $L$ is \emph{not} a symmetric operator on $L^2(\R^d;\nu)$, where $d\nu(x) = \rho(x)dx$. 
 Hence, it follows that one cannot construct the process $X$ in \eqref{eqn:Brownian_particle_rank_vol} using a (symmetric) Dirichlet form.

Additionally, the question of triple collisions is more delicate in the case of a rank-based, discontinuous volatility structure. We have shown in Theorem~\ref{thm:triple_collision} that triple collisions never occur for the class of processes constructed in this paper. The authors of \cite{MR3055258,MR3325099}, however, show that for the process \eqref{eqn:Brownian_particle_rank_vol} triple collisions do not occur with probability one if and only the concavity condition
\[\frac{1}{2}(\sigma_{k+1}^2 + \sigma_{k-1}^2) \leq \sigma_k^2\]
holds for every $k = 2,\dots,d-1$; in particular triple collisions do occur with positive probability for some specifications. As such, it is clear that the approach undertaken in this paper cannot handle such cases. It remains an open problem to find general methods that establish the existence/non-existence of triple collisions for diffusions with discontinuous volatility coefficients beyond the case of competing Brownian particle systems.

\bibliographystyle{plain}
\bibliography{references}	
	
\end{document}